\theoremstyle{plain}
\newtheorem{thm}{Theorem}[section]
\newtheorem{lemma}[thm]{Lemma}
\newtheorem*{thm*}{Theorem}
\newtheorem*{lemma*}{Lemma}
\newtheorem*{prop*}{Proposition}
\newtheorem*{cor*}{Corollary}
\newtheorem*{conj*}{Conjecture}
\theoremstyle{definition}
\newtheorem{defn}[thm]{Definition}
\newtheorem{ex}[thm]{Example}
\newtheorem{alg}[thm]{Algorithm}
\theoremstyle{remark}
\newtheorem*{rmk}{Remark}
\DeclareMathOperator{\rank}{rank}
\DeclareMathOperator{\im}{Im}
\begin{document}

\begin{frontmatter}

% "Title of the Paper"
\title{A New Shellability Proof of \\  an Old Identity of Dixon\thanksref{t1}}
%\thankstext{t1}{???}
\runtitle{A New Proof of Dixon's Identity}

% indicate corresponding author with \corref{}
% \author{\fnms{John} \snm{Smith}\thanksref{t2}\corref{}\ead[label=e1]{smith@foo.com}\ead[label=e2,url]{www.foo.com}}
% \thankstext{t2}{Thanks to somebody}
% \address{line 1\\ line 2\\ \printead{e1}\\ \printead{e2}}

%\author{\fnms{???} \snm{???}\ead[label=e1]{???}}
%\address{???\printead{e1}}
%\and
%\author{\fnms{???} \snm{???}\ead[label=e2]{???}}
%\address{???\printead{e2}}

\begin{aug}
%\author{\fnms{Ruth}
%\snm{Davidson,}\thanksref{t1, t2}\corref{redavid2@illinois.edu}\ead[label=e1]{redavid2@illinois.edu}
%\address{Department of Mathematics, University of Illinois Urbana-Champaign\\
%1409 W. Green Street, Urbana, IL, U.S.A. 61801\\ }
%\ead[label=u1,url]{http://www.math.uiuc.edu/$\sim$redavid2/}
%\printead{e1}
%\printead{u1}}

\author{\fnms{Ruth}
\snm{Davidson,}\thanksref{t1,t2}\ead[label=e1]{redavid2@illinois.edu}}
\address{Ruth Davidson \\ Department of Mathematics \\  University of Illinois Urbana-Champaign \\1409 W. Green Street, Urbana, IL \\  U.S.A. 61801 \\
\printead{e1}\\[3ex] \\}

\and

\author{\fnms{Augustine} \snm{O'Keefe,}\ead[label=e2]{augustine.okeefe@conncoll.edu}}
\address{Augustine O'Keefe \\ Department of Mathematics \\ Connecticut College\\ 270 Mohegan Ave.
New London, CT \\  U.S.A. 06320
\\
\printead{e2}\\ \\ }

\and

\author{\fnms{Daniel} \snm{Parry}
\ead[label=e3]{dan.t.parry@gmail.com}}

\address{Daniel Parry \\ Credit Suisse, Inc. \\ 11 Madison Ave, New York, NY \\  U.S.A. 10010
\\
\printead{e3}\\ }

\thankstext{t1}{Supported by U.S. N.S.F. grants DMS-0954865 and DMS-1401591}
\thankstext{t2}{Corresponding author}
%\thankstext{t3}{Second supporter of the project}
\runauthor{Davidson, O'Keefe and Parry}

%\affiliation{Some University and Another University}
\end{aug}

\runauthor{Davidson, O'Keefe, and Parry}

\begin{abstract}
We give a new proof of an old identity of Dixon (1865-1936) that uses tools from topological combinatorics.  Dixon's identity is re-established by constructing an infinite family of non-pure simplicial complexes $\Delta(n)$, indexed by the positive integers, such that the alternating sum of the numbers of faces of $\Delta(n)$ of each dimension is the left-hand side of the identity.  We show that $\Delta(n)$ is shellable for all $n$. Then, using the fact that a shellable simplicial complex is homotopy equivalent to a wedge of spheres, we compute the Betti numbers of $\Delta(n)$ by counting (via a generating function) the number of facets of $\Delta(n)$ of each dimension that attach along their entire boundary in the shelling order. In other words, Dixon's identity is re-established by using the Euler-Poincar\'{e} relation. 

\end{abstract}

% history:
% \received{\smonth{1} \sday{1}, \syear{0000}}

%\tableofcontents

\end{frontmatter}

\section{Introduction}\label{introduction}

In this manuscript we give a new proof of the identity

\begin{equation}\label{DixID} \sum_{s = 0}^{n} (-1)^{s} { n \choose s}^{3}  = \left\{\begin{array}{lr} 0  & \text{if $n$ is odd, and } \\  (-1)^{n/2} {3n/2 \choose n/2,n/2,n/2}, & \text{if $n$ is even. } \end{array} \right\}\end{equation}

using tools from topological combinatorics.  To our knowledge, this is the first instance of using such tools to study an identity involving alternating sums of binomial coefficients. We hope this approach may illuminate past difficulties that have arisen in resolving ``closed-form" descriptions of identities involving powers of binomial coefficients in other enumerative disciplines (see Sections \ref{sec_discussion} and \ref{sec_futurework} for a discussion of these issues).

The identity (\ref{DixID}) is first attributed to Alfred Cardew Dixon (1865-1936). Dixon originally proved (\ref{DixID}) in the paper  ``On the sum of the cubes of the coefficients in a certain expansion by the binomial theorem" in \emph{Messenger of Mathematics} Volume 20 \cite{DixonReference}, which is a journal that ceased to publish in 1929.  See page 121 \cite{MacMahon} for an early application of (\ref{DixID}) that establishes the earlier reference.

The identity (1) is actually a special case of a more general identity.  Let $n_{1}, n_{2}$, and $n_{3}$ be nonnegative integers and let $N = n_{1} + n_{2} + n_{3}$.  Then  

\begin{equation}\label{3F2} \begin{split} \sum_{s = \max(0, n_{1}-n_{2}, n_{3}-n_{2})}^{ \min(n_{1}, n_{3}, n_{1} + n_{3}-n_{2})} {n_{3} \choose s} {n_{2} \choose n_{1} -s} {n_{1} \choose n_{2}-n_{3} + s} (-1)^{s} = \\ 
\begin{cases} 0  \  \text{if $N $ is odd,} \\ (-1)^{N/2-n_{2}} {N/2 \choose N/2-n_{1}, N/2 - n_{2}, N/2 - n_{3}} \  \text{if $N$ is even.}  \end{cases} \end{split} \end{equation} 

\noindent The identity (\ref{DixID}) is the case $n_{1} = n_{2} = n_{3} = n$.  The general case (\ref{3F2}) is called the \emph{well-poised $\mathstrut_3 F_2$ transformation}, and so is an example of a hypergeometric identity (see page 97 of \cite{Bailey}). See the proof of Lemma 4.2 in the paper \cite{CharneyDavis} for an example of an application of the identity (\ref{3F2}) in topological combinatorics. We have not found a topological interpretation of the identity \ref{3F2}, but we note the generalization because we believe the merit of studying (\ref{DixID}) arises from the connection it represents between enumeration techniques in number-theoretic algebraic combinatorics, topological combinatorics, and the general study of identities.

\section{Background and Definitions}

Establishing (\ref{DixID}) using a generating function is a relatively simple exercise (see, for example, page 23 of \cite{masterthoerem}). The novel contribution of this manuscript is the connection between (\ref{DixID}) and the combinatorial properties of a topological space, in particular a shellable simplicial complex.  Therefore, we begin with a brief introduction to shellability and the necessary background for shellable simplicial complexes. 

The notion of shellability originated in polyhedral theory via the study of boundary complexes of convex polytopes: 

\begin{thm}\label{boundary_thm}
The boundary complex of a convex polytope is shellable.
\end{thm}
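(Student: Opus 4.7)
The plan is to adapt the classical line-shelling construction of Bruggesser and Mani. Given a convex $d$-polytope $P$, I would first pick an interior point $x_0$ and a line $L$ through $x_0$ in sufficiently general position with respect to the facet-defining hyperplanes of $P$ -- that is, all crossing times of these hyperplanes along $L$ are distinct, and no crossing lies on a hyperplane spanned by a lower-dimensional face. Parametrize $L$ by $y(t)$ with $y(0) = x_0$. Then list the facets of $P$ in the order in which their supporting hyperplanes are crossed as $t$ first increases from $0$ to $+\infty$, and subsequently (``wrapping around infinity'') as $t$ runs from $-\infty$ back up to $0$. Call this ordering $F_1, F_2, \ldots, F_N$; the guiding intuition is that at each stage the already-listed facets are precisely those ``visible'' from $y(t)$ during the outgoing phase, and precisely those ``invisible'' during the return phase.

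The second step is to verify the shelling axiom for this order: for each $i$, the intersection $F_i \cap (F_1 \cup \cdots \cup F_{i-1})$ must be a pure $(d-2)$-dimensional subcomplex of $\partial F_i$ that forms the beginning of some shelling of $\partial F_i$. The key observation is that a $(d-2)$-face $G$ of $F_i$ lies in this intersection if and only if the unique other facet of $P$ sharing $G$ with $F_i$ already appears in the list, which happens precisely when that neighbor is visible from $y(t)$ at the moment just after $y(t)$ crosses the hyperplane of $F_i$. Convexity forces the set of visible neighbors to be determined by a line in the affine hull of $F_i$, obtained by central projection of $L$ from an appropriate apex.

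The main obstacle I anticipate is turning this geometric picture into a rigorous verification of the ``beginning of a shelling'' property on $\partial F_i$. I would handle it by induction on $d$, with the trivial base case $d=1$; the inductive hypothesis supplies shellability of $\partial F_i$ and identifies the inherited ordering as a line shelling in one lower dimension. The remaining subtlety is the transition through infinity: one must check that the facets encountered on the return phase continue to satisfy the shelling axiom, which follows by applying the same visibility analysis to the time-reversed trajectory and exchanging the roles of visible and invisible. Once these ingredients are assembled, the constructed order is a shelling, proving the theorem.
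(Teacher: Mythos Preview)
The paper does not actually prove this theorem; it is quoted as background and attributed to Bruggesser and Mani \cite{BruggesserMani}, with only a historical remark that Schl\"{a}fli assumed it and that the proof appeared in 1970. Your proposal is precisely a sketch of the Bruggesser--Mani line-shelling argument, so it aligns with the cited source rather than with any argument original to this paper.
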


Shl\"{a}fli assumed Theorem \ref{boundary_thm} in the nineteenth century when he computed the Euler characteristic of a convex polytope \cite{Schlafli} (see Lecture 3 of \cite{GeoCombiWachs} for a nice historical discussion of this), but Theorem \ref{boundary_thm} was not proved until 1970 \cite{BruggesserMani}, and was soon used in important results such as the proof of the Upper Bound Theorem for simplicial polytopes \cite{McMullen}.  In this manuscript, we will only define shellability for simplicial complexes.   

Shellability helps one to understand the structure of a simplicial complex via its topological and combinatorial qualities.   However, there are other properties of simplicial complexes with similar utility that have a long history in the literature and remain active areas of study.  These include partitionability \cite{StanleyBalanced}, collapsibility \cite{Whitehead}, and contractibility \cite{Borsuk}. Furthermore, relationships between such properties are still being resolved in recent papers such as \cite{NonPartionableCM}.  Topological properties of simplicial complexes also play a role in many fields of applied mathematics, notably topological data analysis. 

\begin{defn}\label{simplicial_complex_defn}

An \emph{(abstract) simplicial complex} on a vertex set $V$ is a collection $\Delta$ of subsets of $V$  satisfying 

\begin{enumerate}

\item if $v \in V$ then $\{ v \} \in \Delta$, and 

\item if $F \in \Delta$ and $G \subseteq F$, then $G \in \Delta$.  

\end{enumerate}

\end{defn}

The subsets of $V$ comprising $\Delta$ are called \emph{faces} or \emph{simplices}.  The dimension $\dim F$ of a face $F$ is $|F| - 1$, and $\dim \Delta$ is simply $\max \{ \dim F : F \in \Delta \}$.   A face $F$ is a \emph{facet} if $F$ is not properly contained in any other face of $\Delta$.  We say $\Delta$ is \emph{pure} if all the facets of $\Delta$ have the same dimension.  We write $\overline{F}$ to denote the sub-complex of $\Delta$ generated by $F$, or in other words $\overline{F} = \{ G  \in \Delta  : G \subseteq F \}$.  

\begin{defn}\label{shellability_defn}
A  simplicial complex $\Delta$ is \emph{shellable} if its facets can be arranged in a linear order $F_{1}, \ldots , F_{t}$ so that the subcomplex $\left( \bigcup_{i = 1}^{k-1} \overline{F}_{i} \right) \cap \overline{F_{k}}$ is pure and $(\dim F_{k} -1 )$- dimensional for $k = 2, \ldots , t$. Such an ordering is called a \emph{shelling}.  
\end{defn}

Any geometric realization of an abstract simplicial complex is a topological space, and we can often understand the topology of these spaces combinatorially.  As we see in the next section, this is possible for shellable simplicial complexes in way that is convenient for our approach to the new proof we present.  First, we review some more combinatorial properties of simplicial complexes.  One useful combinatorial invariant simply counts the faces of each dimension of a finite simplicial complex $\Delta$:

\begin{defn}\label{f_vector_defn}
The \emph{$f$-vector} $f_{\Delta} = ( f_{0}, f_{1}, \ldots , f_{d})$ is the integer vector with entries $f_{i}$ counting the number of faces of dimension $i$.  The maximal entry $f_{d}$ counts the number of facets of $\Delta$, and $\dim \Delta = d$.  If we consider the empty set to be a face of a simplicial complex $\Delta$, we say $\emptyset$ is a face with dimension equal to $-1$, and $f_{\Delta} = (f_{-1}, f_{0}, f_{1}, \ldots , f_{d})$, where $f_{-1} = 1$. 
\end{defn}

Another combinatorial invariant arises as the alternating sum of the entries in the $f$-vector $f_{\Delta}$:   

\begin{defn}\label{reduced_euler_defn}
The \emph{reduced Euler characteristic} of the simplicial complex $\Delta$ is the alternating sum
$$
\widetilde{\chi}(\Delta) = \sum_{i = -1}^{d} (-1)^{i} f_{i}
$$
where $f_{\Delta} = (f_{-1}, f_{0}, \ldots , f_{d} )$.
\end{defn}

Note that the alternating sum in Definition \ref{reduced_euler_defn} of the $f$-vector where $\emptyset$ is \emph{not} included as a face is simply called the \emph{Euler characteristic}.  So, the modifier \emph{reduced} in this context specifically indicates the inclusion of $\emptyset$ as a face.

%Topological qualities of a simplicial complex $\Delta$ can determine the reduced Euler characteristic of $\Delta$. Recall that a \emph{homotopy} between two functions $f$ and $g$ from a space $X$ to a space $Y$ is a continuous map $G$ from $X \times [0,1] \to Y$ such that $G(x,0) = f(x)$ and $G(x,1) = g(x)$. 

%\begin{defn}\label{homotopy_equivalent_defn}
%Two topological spaces $X$ and $Y$ are \emph{homotopy equivalent}  if there exist continuous maps $j : X \to Y$ and $k : Y \to X$ such that $j \circ k$ is homotopic to the identity map on $Y$ and $k  \circ j$ is homotopic to the identity map on $X$.
%\end{defn}

 A very useful set of topological invariants of a simplicial complex is the set of Betti numbers.  To define Betti numbers we must first understand the notion of  the (simplicial) homology groups of a simplicial complex.  For an introduction to simplicial homology, see, for example, Section 2.1 \cite{AT}.  For a simplicial complex $\Delta$, let $\Delta_{k}$ denote the set of all $k$-dimensional simplices in $\Delta$, i.e. the set of all simplices in $\Delta$ with $k + 1$ vertices.

A \emph{simplicial $k$-chain} is a formal sum of $k$-simplices $\sum_{i = 1}^{j} c_{i} \sigma_{i}$ where $\sigma_{i} \in \Delta_{k}$ and $c_{i} \in \mathbb{Z}$. Let $C_{k}$ denote the free abelian group with the basis given by the elements of $\Delta_{k}$. The group $C_{k}$ is often called a \emph{chain group}.  Let $\sigma = \{ v_{1} , \ldots , v_{k + 1} \} \in \Delta_{k}$.  The \emph{$k$th boundary map} $d_{k} : C_{k} \to C_{k-1}$ between chain groups is the function defined by 
$$
\partial_{k} (\sigma) = \sum_{m = 1}^{k+1} (-1)^{m} \{ v_{1}, \ldots , \widehat{v_{m}} , \ldots , v_{k + 1} \}
$$ 
where  $\{ v_{1}, \ldots , \widehat{v_{m}} , \ldots , v_{k + 1} \}$ is the $(k-1)$-simplex obtained by omitting the vertex $v_{m}$.  The elements of the subgroup $\ker \partial_{k}$  of $C_{k}$ are called \emph{cycles} and the elements of the subgroup $\im \partial_{k+1}$ of $C_{k}$ are called \emph{boundaries}.   It is simple to verify that $\im \partial_{k + 1} \subset \ker \partial_{k}$, so that the quotient group $H_{k} = \ker \partial_{k} / \im \partial_{k + 1}$ is defined.  We call the group $H_{k}$ the \emph{$k$th homology group} of $\Delta$, and also write $H_{k}(\Delta)$ when the specific simplicial complex under discussion must be made clear. 

\begin{defn}\label{Betti_defn}
The number  $\beta_{k}(\Delta) = \rank(H_{k}(\Delta))$ is the  \emph{$k$th Betti number} of $\Delta$.  
\end{defn}

As we obtain the reduced Euler characteristic by considering the $f$-vector with $\emptyset$ as a face,  we can obtain the reduced Betti numbers, which, abusing notation we will also refer to as $\beta_{k}(\Delta)$, by including $-1$ in their index set.  We will work with the reduced Betti numbers for the remainder of this manuscript.  The most useful way for our purposes to think of the numbers $\beta_{k}(\Delta)$ is as the number of $k$-dimensional holes that $\Delta$ has as a topological space, along with the fact that in the reduced context $\beta_{0}(\Delta)$ is one less than the number of connected components of the space $\Delta$.

\section{Approach to the New Proof}\label{approach_section}

We now explain the approach to the new proof of the identity (\ref{DixID}) presented in this manuscript.  First we must state the next theorem, which was stated for pure simplicial complexes in \cite{bjorner80} and first appears for general, i.e. not necessarily pure, simplicial complexes, in \cite{NonPure1}; this is mentioned because as we will soon see, we study a family of non-pure simplicial complexes. Theorem \ref{wedge_spheres_thm} is a fundamental example of the attractive topological properties that shellable simplicial complexes have.

\begin{thm}\label{wedge_spheres_thm}
A shellable simplicial complex has the homotopy type of a wedge of spheres in varying dimensions.  For each dimension $r$, the number of $r$-spheres is the number of $r$-facets whose entire boundary is contained in the union of earlier facets in the shelling order. 
\end{thm}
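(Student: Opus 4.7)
The plan is to proceed by induction on the length $t$ of the shelling $F_{1}, F_{2}, \ldots, F_{t}$. Set $\Delta_{k} = \bigcup_{i=1}^{k} \overline{F_{i}}$ and $r_{k} = \dim F_{k}$. The base case $k=1$ is immediate, since $\Delta_{1} = \overline{F_{1}}$ is a contractible simplex and no facet can yet have its boundary inside the empty union $\Delta_{0}$.

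For the inductive step, the shelling hypothesis forces $I_{k} := \Delta_{k-1} \cap \overline{F_{k}}$ to be pure of dimension $r_{k} - 1$, so its facets are precisely $\{F_{k} \setminus \{v\} : v \in M\}$ for some nonempty $M \subseteq F_{k}$. A direct membership check shows that a face $H \subseteq F_{k}$ lies in $I_{k}$ if and only if $M \not\subseteq H$; equivalently, the new faces $\overline{F_{k}} \setminus \Delta_{k-1}$ form the Boolean interval $[M, F_{k}]$ in the face poset of $\overline{F_{k}}$.

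In the \emph{non-spanning} case $M \subsetneq F_{k}$, I would pick any $v \in F_{k} \setminus M$ and pair the $2^{|F_{k} \setminus M|}$ new faces via $\sigma \leftrightarrow \sigma \cup \{v\}$ for $v \notin \sigma$. Processed in order of decreasing cardinality, these pairs yield a valid sequence of elementary collapses reducing $\Delta_{k}$ to $\Delta_{k-1}$: at each stage every other potential coface of $\sigma$ has already been removed in a prior step, and no further cofaces can come from $\Delta_{k-1}$ since $\sigma$, being a new face, is absent from $\Delta_{k-1}$. Hence $\Delta_{k} \searrow \Delta_{k-1}$ and no new sphere is added. In the \emph{spanning} case $M = F_{k}$, the only new face is $F_{k}$ itself, and $\Delta_{k}$ is obtained from $\Delta_{k-1}$ by gluing the $r_{k}$-ball $\overline{F_{k}}$ along the $(r_{k}-1)$-sphere $\partial F_{k}$. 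Since $\overline{F_{k}}$ is contractible, $\Delta_{k}$ is homotopy-equivalent to the mapping cone of the inclusion $\partial F_{k} \hookrightarrow \Delta_{k-1}$, which collapses to $\Delta_{k-1} \vee S^{r_{k}}$ provided the attaching map is null-homotopic, thereby contributing a single $r_{k}$-sphere to the wedge.

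The main obstacle is justifying this null-homotopy in the spanning case; without it the attachment could in principle yield a twisted CW complex (something $\mathbb{RP}^{2}$-like) rather than a wedge. The cleanest resolution is to globalize the non-spanning matching: assemble the pairings from every non-spanning $F_{k}$ into a single discrete Morse matching on $\Delta$, whose critical cells are precisely the spanning facets together with one basepoint vertex. The main theorem of discrete Morse theory then produces a CW model for $\Delta$ having one cell per critical cell, and the layered product structure of the matching (together with the nested inductive construction of the $\Delta_{k}$) forces each critical cell to attach trivially at the basepoint, giving the claimed wedge-of-spheres homotopy type with exactly one $r$-sphere for each spanning $r$-facet.
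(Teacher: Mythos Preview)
The paper does not prove this theorem at all; it is quoted as a known result from Bj\"orner \cite{bjorner80} (pure case) and Bj\"orner--Wachs \cite{NonPure1} (non-pure case), so there is no ``paper's proof'' to compare against. Your inductive/collapsing argument in the non-spanning case is correct and is essentially the standard one, and you have correctly isolated the real difficulty: showing that the attaching map $\partial F_k \hookrightarrow \Delta_{k-1}$ is null-homotopic in the spanning case.

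However, your proposed resolution of that difficulty is where the gap lies. Passing to a global discrete Morse matching whose critical cells are one vertex together with the spanning facets is fine, and the main theorem of discrete Morse theory does yield a CW model with exactly those cells. But that theorem does \emph{not} tell you the attaching maps are trivial; it only tells you the cell counts. A CW complex with a single $0$-cell and higher cells need not be a wedge of spheres: $\mathbb{CP}^2$ has cells in dimensions $0,2,4$ with vanishing cellular boundary maps, yet the $4$-cell attaches via the Hopf map and the space is not $S^2 \vee S^4$. Your sentence ``the layered product structure of the matching \ldots\ forces each critical cell to attach trivially at the basepoint'' is precisely the assertion to be proved, not a proof of it.

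What actually closes the gap in the Bj\"orner--Wachs argument is a strengthened induction: one keeps track not just of the homotopy type of $\Delta_{k-1}$ but of a specific contractible subcomplex through which the boundary of each later spanning facet factors. Concretely, every proper face $G \subsetneq F_k$ first appears in some interval $[M_{j}, F_{j}]$ with $F_{j}$ non-spanning (since a spanning interval contains only the facet itself, and a facet cannot be a proper face of $F_k$); the union of these non-spanning $\overline{F_j}$ is itself shellable with no spanning facets, hence collapsible, and $\partial F_k$ lies inside it. That is the extra piece you need to supply before the mapping-cone step gives $\Delta_{k-1} \vee S^{r_k}$.
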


By Theorem \ref{wedge_spheres_thm}, when a simplicial complex $\Delta$ is shellable (Definition \ref{shellability_defn}), the Betti numbers $\beta_{i}(\Delta)$ can be interpreted as counting the number of $i$-dimensional faces attaching to $\Delta$ along their entire boundary in a shelling order.   Our approach to reestablishing (\ref{DixID}) is to  present, for each $n$, a shellable simplicial complex $\Delta(n)$ with face numbers $f_{s-1} =  { n \choose s}^{3}$ and then calculate the Betti numbers of $\Delta(n)$.  Fix $d$ as the maximum dimension of the simplicial complex $\Delta$. Then the Euler-Poincar\'{e} relation (attributed \cite{BilleraBjorner} to Henri Poincar\'{e}:)

\begin{equation}\label{EulerPoincareRelation}\sum_{i = -1}^{d}  (-1)^{i} f_{i} = \sum_{i = -1}^{d} (-1)^{i} \beta_{i} \end{equation}

provides a new way of understanding and proving \eqref{DixID}. Our suitable family of simplicial complexes $\{ \Delta(n) | n \geq 1 \}$ was given to us by Patricia Hersh \cite{HershCommunication2013}; this project began as a chapter in the first author's Ph. D. Thesis \cite{DavidsonThesis}.  We define a simplicial complex for each $n$ as follows: 

\begin{defn}[Hersh]\label{the_complex}

Fix $n \geq 1$. Let $\Delta(n)$ be the simplicial complex with  vertices given by  3-tuples $(i_{s} ,j_{s},k_{s})$ for $i_{s} ,j_{s} ,k_{s} \in [n]$ and faces given by  collections of vertices 

$$\{ ({i_1},{j_1},{k_1}), \dots ,({i_r},{j_r},{k_r}) \}$$

 satisfying

$$i_{1}<i_{2}< \dots < i_{r} \quad \text{ and} \quad  j_{1}<j_{2}<\dots < j_{r} \quad \text{ and } \quad
k_{1}<  k_{2} <\dots <k_{r}. $$ 

\end{defn}

The number of $r$-faces of $\Delta(n)$ is counted by the product ${n \choose r + 1}^{3}$.  So
$$
\overline{\chi}(\Delta(n)) = \sum_{s = 0}^{n} (-1)^{s + 1} { n \choose s}^{3}  = (-1) \times \sum_{s = 0}^{n} (-1)^{s} { n \choose s}^{3}.
$$

\subsection{Shellability and Homology Calculations}\label{shellability_homology_section}

We can calculate the Betti numbers  (Definition \ref{Betti_defn}) $\beta_{k}(\Delta)$ of a simplicial complex $\Delta$ by understanding how a shelling order puts $\Delta$ together in a fashion that lets us explicitly understand the topology of $\Delta$.  

\begin{defn}\label{homology_facet_defn}
An $r$-dimensional facet $F_{k}$ of $\Delta$ is  a  \emph{homology $r$-facet} if $F_{k}$ satisfies 
$$
\partial F_{k} = F_{k} \cap \bigcup_{i < k} \overline{F_{i}}.
$$
\end{defn}
So $F_{k}$ is a homology $r$-facet when $F_{k}$ attaches to $\Delta$ along its whole boundary in a shelling order. The Betti numbers of any shellable simplicial complex have a natural interpretation in terms of homology facets:  the number of $r$-spheres in the homotopy type of $\Delta$ is the number of homology $r$-facets, as described in Theorem \ref{wedge_spheres_thm}.  In other words, $\beta_{r}(\Delta)$ is equal to the number of $r$-spheres in the homotopy type of $\Delta$.  

%%%%%%%%%%%%%%%%%%%%%%%%%%%%%%%%%%%%%%%%

\section{Some Facts About $\Delta(n)$}\label{facts_section}
%say something about ordering the vertices? 

When $n = 1$, the only nonempty face of $\Delta(n)$ is $\{ (1,1,1) \}$.  Figures \ref{NEquals2_figure} and \ref{NEquals3_figure} show the simplicial complexes  $\Delta(2)$ and $\Delta(3)$, respectively.  It is immediately apparent that $\Delta(n)$ is non-pure for all $n > 1$, and always has precisely one $(n-1)$-dimensional facet given by the collection of vertices $$\{ (1,1,1), (2,2,2), \ldots, (n,n,n) \}.$$ This is the maximum possible dimension of a face of $\Delta(n)$, so the Euler-Poincar\'{e} relation becomes

\begin{equation}\label{EPDN}  \sum_{i = -1}^{n-1}  (-1)^{i} f_{i} = \sum_{i = -1}^{n-1} (-1)^{i} \beta_{i}. \end{equation} 

Note that each of $\Delta(1)$, $\Delta(2)$, and $\Delta(3)$ contain isolated vertices.  Since $\Delta(1)$ is a point, it is pure and connected, but $\Delta(2)$ and $\Delta(3)$ are disconnected and non-pure. 

\begin{figure}[hbtp]
\centering

\includegraphics[width=0.6\textwidth]{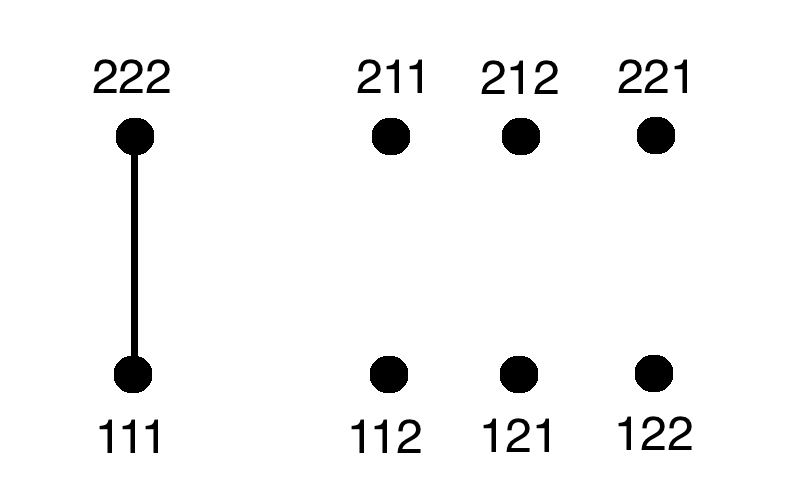}

\caption{The simplicial complex $\Delta(2)$.}
\label{NEquals2_figure}
\end{figure}

\begin{figure}[hbtp]
\centering

\includegraphics[width=0.85\textwidth]{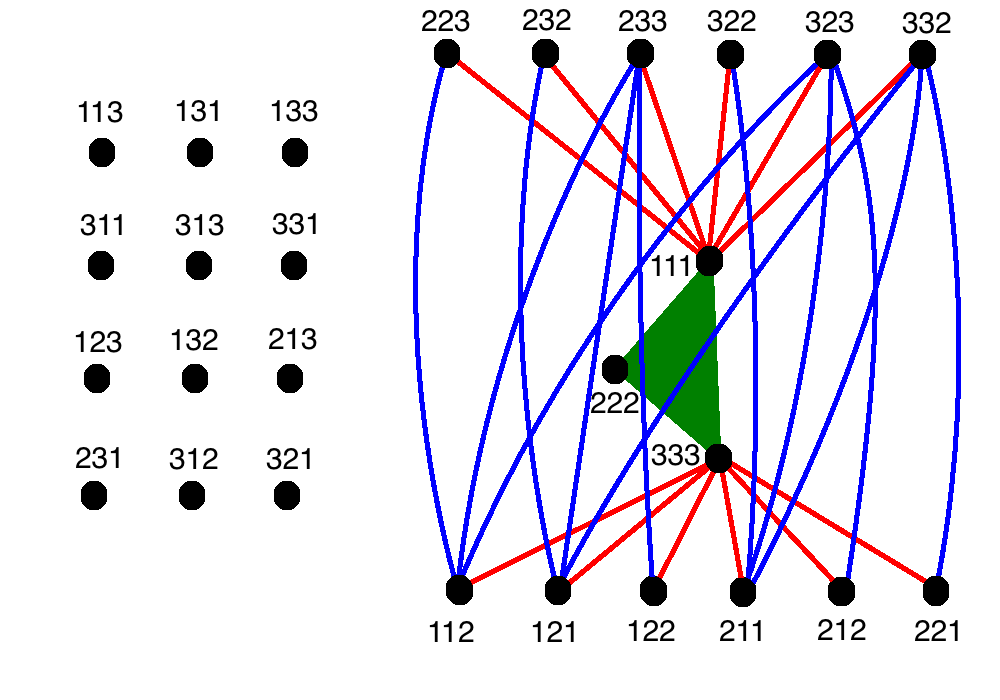}

\caption{The simplicial complex $\Delta(3)$.}
\label{NEquals3_figure}
\end{figure}

Before establishing a shelling order for $\Delta(n)$, we must understand which faces are facets:

\begin{lemma}\label{facets_lemma}
Let $F = \{v_{1}, \ldots , v_{r}\} = \{ (i_{1}, j_{1}, k_{1}), \ldots , (i_{r}, j_{r}, k_{r}) \}$ be a face of $\Delta(n)$.  Then $F$ is a facet if and only if $F$ satisfies the following three properties:
\begin{itemize}
\item[(P1)]  $\max \{ i_{r}, j_{r}, k_{r} \} = n$.
\item[(P2)] $\min\{i_{1}, j_{1}, k_{1} \} = 1$.
\item[(P3)] If $r \geq 2$, $\min \{i_{\ell + 1} - i_{\ell}, j_{\ell + 1} - j_{\ell}, k_{\ell + 1} - k_{\ell} \} = 1$ for all $\ell \in [r-1]$. 
\end{itemize}
\end{lemma}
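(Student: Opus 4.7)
The plan is to recast the facet condition as a non-extension condition: $F$ is a facet of $\Delta(n)$ if and only if there is no vertex $v = (a,b,c) \in [n]^3$ for which $F \cup \{v\}$ is still a face. Because $v$ must be strictly ordered coordinate-wise with every existing $v_\ell$, it must slot into the chain $v_1 < v_2 < \cdots < v_r$ in one of exactly three ways: before $v_1$, after $v_r$, or strictly between some consecutive pair $v_\ell, v_{\ell+1}$. I would align these three possible locations with (P2), (P1), and (P3) respectively.

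For the forward direction I would argue by contrapositive and exhibit an explicit extending vertex whenever one of the three properties fails. If (P1) fails, each of $i_r, j_r, k_r$ is at most $n-1$, so $(i_r+1, j_r+1, k_r+1)$ is a legitimate vertex in $[n]^3$ that can be appended to $F$; the failure of (P2) is treated symmetrically by prepending $(i_1-1, j_1-1, k_1-1)$. If (P3) fails at index $\ell$, then $i_{\ell+1}-i_\ell$, $j_{\ell+1}-j_\ell$, and $k_{\ell+1}-k_\ell$ are each at least $2$, and the vertex $(i_\ell+1, j_\ell+1, k_\ell+1)$ lies strictly between $v_\ell$ and $v_{\ell+1}$ in every coordinate and hence between all other $v_m$ as well by transitivity.

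For the converse, assume (P1)--(P3) all hold and suppose $F \cup \{(a,b,c)\}$ is a face. Locating $(a,b,c)$ in the strict chain produces three cases. If it precedes $v_1$, then $\min\{a,b,c\} < \min\{i_1, j_1, k_1\} = 1$, contradicting $(a,b,c) \in [n]^3$. If it follows $v_r$, then $\max\{a,b,c\} > \max\{i_r, j_r, k_r\} = n$, again impossible. If it lies strictly between $v_\ell$ and $v_{\ell+1}$, then all three coordinate gaps exceed $1$, directly contradicting (P3). Hence no extension exists and $F$ is a facet.

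The only real delicacy is verifying that the candidate extending vertex produced in each contrapositive case remains coordinate-wise comparable to \emph{all} existing $v_m$ rather than only its immediate neighbors; this is a one-line consequence of transitivity of the strict componentwise order, so I expect no serious obstacle and anticipate a short, tidy case analysis.
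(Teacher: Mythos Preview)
Your proposal is correct and follows essentially the same approach as the paper: both argue by contrapositive for the forward direction using the same explicit extending vertices $(i_r+1,j_r+1,k_r+1)$, $(i_1-1,j_1-1,k_1-1)$, and $(i_\ell+1,j_\ell+1,k_\ell+1)$ in the three cases. Your version is in fact slightly more complete, since you spell out the converse direction (if (P1)--(P3) hold then no extending vertex exists) by a clean three-case analysis, whereas the paper simply asserts sufficiency after proving necessity.
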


\begin{proof}
Let $F =  \{v_{1}, \ldots , v_{r}\}$ be a facet of $\Delta(n)$.  Properties P1 and P2 clearly must hold for $F$: If P1 does not hold $F \subset F \cup \{v_{r + 1} \}$, where $v_{r + 1} = ( i_{r} + 1, j_{r} + 1, k_{r} + 1 )$-or one could just add $(n,n,n)$ as a vertex to obtain a facet. If P2 does not hold, then $F \subset \{ v_{0} \} \cup F$ where $v_{0} = (i_{1}-1, j_{1}-1, k_{1}-1)$ or one could simply add $(1,1,1)$ as $v_{0}$ to obtain a facet.  If P3 does not hold, there exists an index $\ell \in [r-1]$ where $\min \{i_{\ell + 1} - i_{\ell}, j_{\ell + 1} - j_{\ell}, k_{\ell + 1} - k_{\ell} \} > 1$ and we can construct a vertex $v_{s} = (i_{s}, j_{s}, k_{s} )$ satisfying

$$ 
i_{\ell} < i_{s} < i_{(\ell+1)},   \quad j_{\ell} < j_{s} < j_{(\ell+1)},   \quad \text{and}  \quad 
k_{\ell} < k_{s} < k_{(\ell+1)}.  
$$
Then $F ' = \{v_{1}, v_{2}, \ldots ,  v_{\ell}, v_{s}, v_{\ell + 1}, \ldots , v_{r}\}$ properly contains $F$ and $F$ cannot be a facet.  So each condition is necessary and sufficient for $F$ to be a facet.  Informally, a vertex ``filling in the index difference in one of the positions" could result in a facet. \end{proof}

From this characterization of the facets of $\Delta(n)$, we immediately obtain the next lemma:

\begin{lemma}\label{disconnected_lemma}
For $n \geq 2$, the simplicial complex $\Delta(n)$ is non-pure and disconnected.
\end{lemma}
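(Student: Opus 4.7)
The plan is to verify both claims by exhibiting explicit faces and using the characterization of facets in Lemma \ref{facets_lemma}. Both parts are essentially immediate consequences of the ordering constraint in Definition \ref{the_complex} together with the facet criterion, so the ``obstacle" here is really just choosing the right example vertex.

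For non-purity, I would exhibit two facets of different dimensions. The face $F^{\ast} = \{(1,1,1),(2,2,2),\ldots,(n,n,n)\}$ trivially satisfies P1 (since $i_r = n$), P2 (since $i_1 = 1$), and P3 (since consecutive coordinate differences are all equal to $1$), so it is an $(n-1)$-dimensional facet. On the other hand, the singleton $\{(1,n,1)\}$ satisfies $\max\{1,n,1\}=n$ and $\min\{1,n,1\}=1$, while P3 is vacuous for singletons, so it is a $0$-dimensional facet. For $n \geq 2$ these two facets have distinct dimensions, so $\Delta(n)$ is non-pure.

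For disconnectedness, I would show that the vertex $v = (1,n,1)$ is isolated in $\Delta(n)$ whenever $n \geq 2$. By Definition \ref{the_complex}, any edge containing $v$ would need a second vertex $w = (a,b,c)$ whose three coordinates are either componentwise strictly greater than those of $v$ or componentwise strictly less. In the first case we would need $b > n$, which is impossible; in the second case we would need both $a < 1$ and $c < 1$, which is also impossible. Hence $v$ lies in no edge, yet the vertex $(1,1,1)$ lies in the edge $\{(1,1,1),(2,2,2)\}$, so $\Delta(n)$ has at least two connected components.

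Combining the two arguments gives the lemma. The reason there is no real obstacle is that the ``componentwise strictly increasing" requirement in Definition \ref{the_complex} is quite restrictive: a vertex that mixes the extreme values $1$ and $n$ across coordinates automatically becomes isolated and, via Lemma \ref{facets_lemma}, a $0$-dimensional facet. The only minor care needed is to pick a vertex like $(1,n,1)$ (rather than, say, $(1,1,1)$ which does extend to larger faces) so that both claims can be witnessed by the same construction.
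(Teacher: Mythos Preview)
Your proof is correct and follows essentially the same approach as the paper: both exhibit the diagonal facet $\{(1,1,1),\ldots,(n,n,n)\}$ for non-purity and an isolated vertex of the form $(1,n,1)$ (more generally, any vertex whose coordinate set contains both $1$ and $n$) for disconnectedness. Your argument is, if anything, slightly more explicit in verifying that $(1,n,1)$ lies in no edge and that a second component exists.
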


\begin{proof}
Any vertex $v_{s} = (i_{s}, j_{s}, k_{s})$ satisfying $\{ 1, n \} \subset \{ i_{s}, j_{s}, k_{s} \}$ must be an isolated vertex, as $v_{s}$ cannot be contained in any other face in this case.  If $n > 1$ there is more than one vertex in $\Delta(n)$, so $\Delta(n)$ contains isolated vertices and is disconnected for all $n \geq 2$.  For all $n$ there is an $(n-1)$-dimensional facet $F(n) = \{ (1,1,1), (2,2,2), \ldots , (n,n,n)\}$, and for $n \geq 2$, $\dim F(n) > 0$.   So $\Delta(n)$ is not pure for $n \geq 2$.   \end{proof}

It is also useful to obtain new faces of $\Delta(n)$ from old, and understand how to obtain a new facet from an old facet.  To make these actions possible, we now define operations on the faces of $\Delta(n)$ in Definitions \ref{up_twist}, \ref{down_twist}, and \ref{safe_twist}. 

%Define up twist
\begin{defn}\label{up_twist}
Let $F = \{v_{1}, v_{2}, \ldots , v_{r} \}$ be a face of $\Delta(n)$.  For $\ell \in [r-1]$, let  
$$
 A_{\ell} = \{ a_\ell \in \{i_{\ell}, j_{\ell} ,k_{\ell} \}  : a_{\ell + 1} - a_{\ell} > 1\}
$$ 
and let 
$$
 A_{r} = \{ a_r \in \{i_{r}, j_{r} ,k_{r} \}  :  a_{r} < n \}.
$$
 For $\ell$ such that $A_{\ell} \neq \emptyset$,  define a \textbf{up-twist} $G$ about the vertex $v_{\ell}$ as the face of $\Delta(n)$ obtained by replacing $v_{\ell}$ in $F$  with the new vertex $v_{\ell}'$ of $\Delta(n)$ obtained by  increasing an index of $v_{\ell}$ in $A_{\ell}$ by 1.  
\end{defn}

%down twist definition
\begin{defn}\label{down_twist}
Let $F = \{v_{1}, \ldots , v_{r}\}$ be a face of $\Delta(n)$.  For $\ell \in \{2, \ldots , r\} $, let 
$$
B_{\ell} = \{ b_{\ell} \in \{i_{\ell}, j_{\ell} ,k_{\ell} \} :  b_{\ell} - b_{\ell-1}  > 1\}
$$  
and let
$$
B_{1} = \{ b_{1} \in \{i_{1}, j_{1} ,k_{1} \} : b_{1}   > 1\}
$$  

For $\ell$ such that $B_{\ell} \neq \emptyset$,  define a \textbf{down-twist} $G$  about the vertex $v_{\ell}$ as the face  of $\Delta(n)$ obtained by replacing $v_{\ell}$ in $F$  with the new vertex  $v_{\ell}'$ of $\Delta(n)$ obtained by  decreasing an index of $v_{\ell}$  in $B_{\ell}$ by 1.  
\end{defn}

\begin{ex} In this example, we consider faces of $\Delta(5)$. In the face $F = \{ (1,1,2), (2,5,5) \}$ with $v_{1} = (1,1,2)$, and $v_{2} = (2,5,5)$ the set $A_{1} = \{ j_{1}, k_{1} \} = \{1, 2\}$, and $B_{2} = \{ j_{2}, k_{2} \} = \{5,5\}$.  An up-twist of $F$ about $v_{1}$ is the new face $\{ (1,2,2),(2,5,5) \}$.  A down-twist of $F$ about $v_{2}$ is the new face $\{(1,2,2),(2,4,5)\}$.

\end{ex}

\begin{defn}\label{safe_twist}

Recall conditions P1, P2, and P3 from Lemma \ref{facets_lemma}, and let $F$ be a facet.  We say an up-twist $G$ of a facet $F = \{v_{1}, v_{2}, \ldots , v_{r} \}$ is \textbf{safe} if either $\ell > 1$ and condition P3 is conserved, or $\ell = 1$ and condition P2 is conserved.  We say a down-twist $G$ of a facet $F$ is safe if either $\ell < r$ and condition P3 is conserved, or $\ell = r$ and condition P1 is conserved.
\end{defn}

\begin{ex}
In this example, we again consider faces of $\Delta(5)$.  The down-twist $$\{(1,2,2), (2,4,4)\}$$ of $$\{ (1,2,2),(2,4,5) \}$$ about the vertex $(2,4,5)$ is not safe, because condition P1 is not conserved.   The down-twist $$\{ (1,2,2), (2,3,5)\}$$ of $$\{ (1,2,2),(2,4,5) \}$$ about the vertex $(2,4,5)$ is safe, because P1 is conserved. 
\end{ex}

\begin{lemma}\label{twisting_lemma}
Let $F_{1} = \{v_{1}, \ldots , v_{r} \}$ be a facet of $\Delta(n)$.  If $F_{2}$ is a safe up-twist or a safe down-twist of $F_{1}$,  then $F_{2}$ is a facet of $\Delta(n)$.
\end{lemma}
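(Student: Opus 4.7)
The plan is to verify Lemma \ref{facets_lemma}'s three defining properties (P1), (P2), (P3) for the putative facet $F_2$, after first confirming that $F_2$ is at all a face of $\Delta(n)$. I would handle the safe up-twist case in detail; the safe down-twist case follows by the symmetric argument, with the roles of the two safety clauses swapped.

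First I would check that $F_2$ is a face of $\Delta(n)$, i.e., that the sequences of $i$-, $j$-, and $k$-coordinates are still strictly increasing. A safe up-twist about $v_\ell$ modifies only one coordinate of $v_\ell$, increasing it by $1$. Strict increase with respect to $v_{\ell-1}$ is immediate. Strict increase with respect to $v_{\ell+1}$ (when $\ell < r$) holds because the definition of $A_\ell$ requires the modified coordinate's difference to exceed $1$, so decreasing that difference by $1$ still leaves it positive. When $\ell = r$ there is no vertex to the right, and the condition $a_r < n$ simply ensures the new coordinate remains in $[n]$. The other two coordinates of $v_\ell$, and all coordinates of the remaining vertices, are unchanged.

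Next I would verify (P1), (P2), (P3) for $F_2$. The essential point is that each facet condition is either trivially inherited from $F_1$ or is supplied by exactly one of the two clauses of Definition \ref{safe_twist}. For (P1): if $\ell < r$ then $v_r$ is untouched; if $\ell = r$ then the definition of $A_r$ forces the modified coordinate to have been strictly less than $n$, so whichever coordinate of $v_r$ equaled $n$ (which exists by (P1) for $F_1$) is a different coordinate and still equals $n$. For (P2): if $\ell > 1$ then $v_1$ is untouched; if $\ell = 1$ the safety hypothesis directly supplies (P2). For (P3): the pairs $(v_m, v_{m+1})$ with $m \notin \{\ell-1, \ell\}$ are untouched. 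The pair $(v_\ell', v_{\ell+1})$, when $\ell < r$, has its modified coordinate's difference shrink by $1$, but the definition of $A_\ell$ forced that difference to exceed $1$, so the coordinate witnessing the $\min = 1$ clause of (P3) for $F_1$ is distinct from the modified one and remains unchanged, leaving $\min = 1$ for the new pair automatically. The one remaining pair $(v_{\ell-1}, v_\ell')$, when $\ell > 1$, has its modified coordinate's difference grow by $1$; this is precisely the pair where (P3) could be destroyed, and the safety hypothesis asserts that it is not.

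I expect no substantive obstacle: the argument is a case analysis whose structure is already dictated by Definitions \ref{up_twist}, \ref{down_twist}, and \ref{safe_twist}, and the bookkeeping reduces to matching each potential failure mode of (P1), (P2), (P3) to either a trivial automatic preservation or the corresponding safety clause. The hardest part is simply keeping straight the asymmetric roles played by $A_\ell$ versus $B_\ell$ at interior positions and by $A_r$, $B_1$ at the endpoints, so that the reader sees why exactly one of the three facet conditions needs the safety hypothesis in each scenario.
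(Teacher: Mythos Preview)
Your proposal is correct and follows essentially the same approach as the paper: verify properties (P1), (P2), (P3) of Lemma~\ref{facets_lemma} for $F_2$, handling the up-twist case explicitly and deferring the down-twist case to symmetry. You are somewhat more thorough than the paper---you explicitly check that $F_2$ is a face (the paper builds this into Definition~\ref{up_twist} by fiat) and you separate out the $(v_{\ell-1},v_\ell')$ pair in the (P3) analysis, whereas the paper's (P3) paragraph only treats the $(v_\ell',v_{\ell+1})$ pair and leaves the other to the safety hypothesis implicitly---but the underlying argument is the same.
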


\begin{proof}
First we consider the case where  $F_{1}$ is a facet $\{v_{1}, \ldots , v_{r} \}$ of $\Delta(n)$ and
$$
F_{2} = \{ v_{\ell'} \} \cup \{ v_{j} : j \in [r] \setminus \{ \ell\} \}
$$
is a safe up-twist of $F_{1}$ about the vertex $v_{\ell}$.  By Lemma \ref{facets_lemma}, it is sufficient to show that $F_{2}$ satisfies P1, P2, and P3. If $\ell < r$, then $v_{r}$ is unaffected by the vertex change and P1 holds for $F_{2}$.  If $\ell = r$, then the maximum element of $v_{r}'$ will not decrease and P1 is satisfied by $F_{2}$. If $\ell > 1$, then P2 is trivially satisfied by $F_{2}$.  If $\ell =1$,  then since $F_{2}$ is a \textit{safe} up-twist, $\min\{i_{1},j_{1},k_{1} \} = \min\{i_{1}', j_{1}',k_{1}'\} = 1$, where $(i_{1}', j_{1}', k_{1}')$ is the new vertex in $v_{1}'  \in F_{2}$.  So $P2$ holds.

Now, since $F_{1}$ is a facet, $\min \{i_{\ell + 1} - i_{\ell}, j_{\ell + 1} - j_{\ell}, k_{\ell + 1} - k_{\ell} \} = 1$.  Without loss of generality, we can say $\min \{i_{\ell + 1} - i_{\ell}, j_{\ell + 1} - j_{\ell}, k_{\ell + 1} - k_{\ell} \}  = i_{\ell + 1} - i_{\ell}$, so that $i_{\ell} \notin A_{\ell}$ and $v_{\ell}' = (i_{\ell}, j_{\ell}', k_{\ell}')$.  Therefore 
$\min \{i_{\ell + 1} - i_{\ell}, j_{\ell + 1} - j_{\ell}', k_{\ell + 1} - k_{\ell}' \} = 1$ and P3 holds.

A similar argument shows that if $F_{2}$ is a safe down-twist of $F_{1}$, then $F_{2}$ is also a facet of $\Delta(n)$.  \end{proof}

%%%%%%%%%%%%%%%%%%%%%%%%%%%%%%%%%%%%%%%%%%%%%%%%%%%%%%%%%%%%%%%%%%%%%%%%%%%%%%%%%%%%%%%%%%%%%%%%%%Shelling order
\section{A Shelling Order for $\Delta(n)$}\label{shelling_order_section}

In this section we construct a shelling order for $\Delta(n)$. Recall that by Lemma \ref{disconnected_lemma}, $\Delta(n)$ is not pure.   To set up the shelling order, we first partition $\Delta(n)$ into sets of facets according to dimension.  For $0 \leq m \leq n-1$, let $S_{m}$ be the set of facets of dimension $m$.  For example, $S_{n-1}$ is the set containing the single $(n-1)$-dimensional facet $F(n)=  \{(1,1,1), (2,2,2), (3,3,3), ... , (n,n,n)  \}$, and  $S_{0}$ contains the facets comprised solely of isolated vertices such as the facet $F = \{ (1,n,1) \}$.  It is intuitive that such our shelling order would require respecting the dimension of facets of $\Delta(n)$ due to elementary facts about non-pure shellable complexes (see Lemma 2.2 in \cite{NonPure1}).

Non-pure shellable simplicial complexes were not studied in detail before the work in \cite{NonPure1}, and we highlight the fact that our main object is not a  simplicial complex in order to (1) build on this body of research by providing additional motivating examples and (2) providing a fundamental cross-disciplinary (within the partitioned field of combinatorics) application of an existing, fully-developed toolkit. The next definition allows us to order the facets in $S_{m}$ for a fixed $m$ using the lexicographic order.  

\begin{defn}\label{sigma_defn} The \textit{$\sigma$-word} $\sigma(F)$ of the face $F = \{v_{1}, \ldots ,v_{r} \}$ is the sequence $$( i_{1}, j_{1}, k_{1}, i_{2}, j_{2}, k_{2}, i_{3}, \ldots , k_{r-1}, i_{r}, j_{r}, k_{r} ). $$  \end{defn}

Informally, we can see that the $\sigma$-word of a face is obtained by simply ignoring all the parentheses in the listing of the vertices of the face. 

\begin{ex}
The $\sigma$-word of the vertices of the facet $$F = \{(1,2,1), (3,3,3), (4,4,5) \} $$ of $\Delta(5)$ is $$\sigma(F) = (1,2,1,3,3,3,4,4,5 ). $$ The $\sigma$-word of the vertices of the facet $$G = \{ (1,2,2), (2,3,3), (4,4,5) \}$$ of $\Delta(5)$ is $$\sigma(G) = (1,2,2,2,3,3,4,4,5 ). $$ In the lexicographic order we have $\sigma(F) < \sigma (G)$.  
\end{ex}

Now we define the order on the facets of $\Delta(n)$ that we will show is a shelling order. 

\begin{defn}\label{O_defn} Define an order $\mathcal{O}$ on the facets of $\Delta(n)$ as follows: $F_{i} < F_{j}$ in the  order $\mathcal{O}$ if
\begin{enumerate}
\item $F_{i} \in S_{m}$ and $F_{j} \in S_{\ell}$ for $\ell< m$ or
\item $\ell = m$ and $\sigma(F_{i})$ is lexicographically smaller than  $\sigma(F_{j})$.  
\end{enumerate}
\end{defn}

%shelling order theorem
\begin{thm}\label{shelling_thm}
The order $\mathcal{O}$ is a shelling order for the facets of $\Delta(n)$. 
\end{thm}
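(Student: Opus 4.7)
The plan is to verify Definition~\ref{shellability_defn} in its equivalent ``codimension-one'' reformulation: for every $k \geq 2$ and every earlier facet $F_i$ with $i < k$ in $\mathcal{O}$, I will produce a facet $F_j$ with $j < k$ such that $F_i \cap F_k \subseteq F_j \cap F_k$ and $|F_j \cap F_k| = |F_k| - 1$. Writing $F_k = \{v_1, \ldots, v_r\}$ with $v_\ell = (i_\ell, j_\ell, k_\ell)$, the basic local move takes a chosen vertex $v_\ell \in F_k$ and produces an earlier facet containing the codimension-one face $F_k \setminus \{v_\ell\}$ by one of two mechanisms. Mechanism (T), a \emph{twist}, performs a safe down-twist of $F_k$ at $v_\ell$; by Lemma~\ref{twisting_lemma} this yields a facet in the same $S_m$ whose $\sigma$-word is strictly smaller than $\sigma(F_k)$, hence earlier in $\mathcal{O}$ by clause~(2) of Definition~\ref{O_defn}. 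Mechanism (J), a \emph{dimension jump}, extends $F_k \setminus \{v_\ell\}$ to a facet of strictly larger dimension by inserting two or more fresh vertices in the gap between $v_{\ell-1}$ and $v_{\ell+1}$; this facet lies in some higher $S_{m'}$ and is therefore earlier by clause~(1) of Definition~\ref{O_defn}.

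I split into two cases. In Case~1, $\dim F_i > \dim F_k$. Since $|F_i| > |F_k|$, automatically $F_i \neq F_k$ and $F_i \cap F_k \subsetneq F_k$; if $|F_i \cap F_k| = r-1$ I take $F_j := F_i$ directly, and otherwise there are at least two candidate vertices in $F_k \setminus F_i$, giving me the flexibility to pick one on which mechanism (T) or (J) succeeds.

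In Case~2, $\dim F_i = \dim F_k$ and $\sigma(F_i)$ is lexicographically less than $\sigma(F_k)$. Let $p$ be the first position where the two $\sigma$-words disagree, let $v_\ell$ be the vertex of $F_k$ occupying position $p$, and let $c \in \{1,2,3\}$ record which coordinate of $v_\ell$ sits at position $p$. Writing $F_i = \{w_1, \ldots, w_r\}$, minimality of $p$ gives $w_t = v_t$ for $t < \ell$ together with the $c$-th coordinate of $w_\ell$ strictly smaller than that of $v_\ell$; combining this with the strict-increase rule of Definition~\ref{the_complex} forces the $c$-th coordinate of $v_\ell$ to exceed that of $v_{\ell-1}$ by at least $2$ (and to be at least $2$ when $\ell = 1$), so the down-twist of $F_k$ at $v_\ell$ that decreases the $c$-th coordinate by $1$ is well-defined. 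I then apply mechanism (T) whenever this down-twist is safe and fall back to (J) otherwise. The resulting $F_j$ contains $F_k \setminus \{v_\ell\}$, and since $v_\ell \notin F_i$ we get $F_i \cap F_k \subseteq F_k \setminus \{v_\ell\} = F_j \cap F_k$, as required.

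The hard part is the safety verification for the proposed down-twist: properties P1, P2, and P3 from Lemma~\ref{facets_lemma} must all survive the decrease of the $c$-th coordinate of $v_\ell$. Safety fails only in very specific configurations, prototypically when the $c$-th coordinate is the \emph{unique} one realising P3 on the $(v_\ell, v_{\ell+1})$ interface, or (when $\ell = r$) the unique coordinate attaining $n$. In exactly these configurations the newly opened gap has width exactly $2$, which is precisely what is required to insert two new vertices in place of $v_\ell$, so mechanism (J) produces a strictly higher-dimensional earlier facet containing $F_k \setminus \{v_\ell\}$. The remaining bookkeeping---handling the boundary cases $\ell = 1$ and $\ell = r$ where P2 and P1 respectively impose extra constraints, and, in Case~1, verifying that among two or more missing vertices one always yields a valid twist or jump---is routine once the (T)/(J) dichotomy has been established.
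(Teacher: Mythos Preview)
Your overall architecture---the codimension-one reformulation, mechanisms (T) and (J), and the equal-dimension analysis in Case~2---matches the paper's proof and is correct. In particular, your identification of $v_\ell$ via the first differing position $p$ is exactly the paper's choice (it is the vertex $v_{k,1}$ of Lemma~\ref{non_empty_B}), your argument that the $c$-th coordinate lies in $B_\ell$ is sound, and the (T)/(J) split when the down-twist is or is not safe mirrors the paper's sub-cases (i)/(ii).

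The genuine gap is in Case~1. Both (T) and (J), as you have set them up, require $B_\ell \neq \emptyset$ for the chosen vertex $v_\ell$: mechanism (T) is literally a down-twist, and mechanism (J) needs room for two new vertices between $v_{\ell-1}$ and $v_{\ell+1}$, which is impossible when $B_\ell = \emptyset$ since then $v_\ell = v_{\ell-1} + (1,1,1)$ and the P3 constraint at $(v_\ell,v_{\ell+1})$ forces some coordinate gap $v_{\ell+1}-v_{\ell-1}$ to equal exactly $2$. So the real content of Case~1 is showing that \emph{some} $v_\ell \in F_k \setminus F_i$ has $B_\ell \neq \emptyset$ when $\dim F_i > \dim F_k$. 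You label this ``routine bookkeeping,'' but it is not: it is precisely the higher-dimension half of Lemma~\ref{non_empty_B}, and the paper proves it via a pigeonhole argument that first partitions $\mathcal{V}_i$ and $\mathcal{V}_k$ into equally many blocks of consecutive vertices (Lemma~\ref{vertex_partition_lemma}), then uses $|\mathcal{V}_i| > |\mathcal{V}_k|$ to locate a block $K_A$ shorter than its counterpart $I_A$, forcing a coordinate jump of size at least $2$ somewhere in $K_A$. Merely having two or more candidate vertices does not by itself give you one with $B_\ell \neq \emptyset$; you need this structural comparison between $F_i$ and $F_k$.
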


The remainder of this section will be devoted to showing that $\mathcal{O}$ is a shelling order. We need the following well-known lemma, which provides a useful working definition of a shelling, in our proof of Theorem \ref{shelling_thm}.  This lemma is explicitly stated for non-pure simplicial complexes as Lemma 2.3 in \cite{NonPure1}, but as we have see for other facts about non simplicial complexes, a earlier version for pure simplicial complexes appears in \cite{bjorner80}. 

%shelling lemma
\begin{lemma}\label{shelling_lemma}
An order $F_{1}, F_{2}, \ldots , F_{t}$ of the facets of a simplicial complex $\Delta$ is a shelling if and only if for every $i$ and $k$ satisfying $1 \leq i < k \leq t$ there is a $j$ with $1 \leq j < k $ and a vertex $v \in F_{k}$ such that $F_{i} \cap F_{k} \subset F_{j} \cap F_{k} = F_{k} \setminus \{ v \}$.  
\end{lemma}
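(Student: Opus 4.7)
The plan is to prove the equivalence by analyzing the subcomplex $\Delta_k := \left(\bigcup_{i<k}\overline{F_i}\right) \cap \overline{F_k}$ at the level of its codimension-one faces. The bridging observation is that any facet of $\Delta_k$ of dimension $\dim F_k - 1$ must have the form $F_k \setminus \{v\}$ for some vertex $v \in F_k$, simply because it is a codimension-one subset of $F_k$. This links Definition \ref{shellability_defn} (purity of each $\Delta_k$) to the exchange condition in the lemma.

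For the ($\Rightarrow$) direction, suppose the given order is a shelling. Fix $1 \leq i < k \leq t$. Since $F_i \cap F_k$ lies in $\overline{F_i} \cap \overline{F_k} \subseteq \Delta_k$, and $\Delta_k$ is pure of dimension $\dim F_k - 1$, some facet of $\Delta_k$ contains $F_i \cap F_k$; by the observation above this facet is of the form $F_k \setminus \{v\}$. Because $F_k \setminus \{v\}$ belongs to $\bigcup_{i'<k}\overline{F_{i'}}$, it sits inside some $F_j$ with $j<k$. The distinct facets $F_j, F_k$ cannot contain one another, so $F_j \cap F_k \neq F_k$, which forces the chain $F_k \setminus \{v\} \subseteq F_j \cap F_k \subseteq F_k$ to be an equality at the left end, giving $F_j \cap F_k = F_k \setminus \{v\}$. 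This supplies the required $j$ and $v$.

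For the ($\Leftarrow$) direction, assume the exchange condition. Fix $k \geq 2$ and let $G \in \Delta_k$ be arbitrary; then $G \subseteq F_i \cap F_k$ for some $i<k$. Applying the hypothesis to this $i$ yields $j<k$ and $v \in F_k$ with
\[
G \;\subseteq\; F_i \cap F_k \;\subseteq\; F_k \setminus \{v\} \;=\; F_j \cap F_k,
\]
so $G$ lies in $F_k \setminus \{v\} \in \overline{F_j} \cap \overline{F_k} \subseteq \Delta_k$. Hence every face of $\Delta_k$ extends to a codimension-one face of $F_k$ inside $\Delta_k$. Since $F_k \notin \Delta_k$ (it is a facet, not contained in any $F_i$ with $i<k$), no face of $\Delta_k$ reaches dimension $\dim F_k$, so $\Delta_k$ is pure of dimension $\dim F_k - 1$, which is exactly the shelling condition.

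The main obstacle will be the non-pure regime in which some $F_k$ is a singleton (a relevant case for this paper, as isolated vertices appear in $\Delta(n)$). Then $F_k \setminus \{v\} = \emptyset$ and one must check that both implications still pass through: in the forward direction every earlier facet $F_i$ is automatically disjoint from $F_k$ because $F_k$ is maximal, so $F_i \cap F_k = \emptyset$ and the exchange condition holds with $j = i$; in the backward direction the hypothesis $F_i \cap F_k \subseteq \emptyset$ is precisely the assertion that $\Delta_k = \{\emptyset\}$ is pure of dimension $-1$. One should also explicitly handle the base case $k=2$, where $\bigcup_{i<k}\overline{F_i} = \overline{F_1}$, to confirm the argument is uniform in $k$.
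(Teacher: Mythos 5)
The paper does not actually prove Lemma \ref{shelling_lemma}; it cites it as Lemma 2.3 of Bj\"{o}rner and Wachs \cite{NonPure1} (with a pure-complex predecessor in \cite{bjorner80}), so there is no in-paper argument to compare against. Your proof is correct and is essentially the standard derivation from Definition \ref{shellability_defn}. The forward direction correctly uses that any $(\dim F_k - 1)$-dimensional face of $\overline{F_k}$ must be $F_k \setminus \{v\}$ for some vertex $v$, extracts the witness $j$ from membership of $F_k \setminus \{v\}$ in $\bigcup_{i'<k}\overline{F_{i'}}$, and pins down $F_j \cap F_k = F_k \setminus \{v\}$ via incomparability of distinct facets. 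The backward direction correctly shows every face of $\Delta_k := \left(\bigcup_{i<k}\overline{F_i}\right) \cap \overline{F_k}$ sits inside a codimension-one face of $F_k$ that is itself in $\Delta_k$, and that $F_k \notin \Delta_k$, which together give purity in dimension $\dim F_k - 1$. Your attention to the $\dim F_k = 0$ regime is warranted (as $\Delta(n)$ has isolated vertices for $n \geq 2$), and you handle it correctly: a $0$-dimensional facet is disjoint from every other facet by maximality, so $\Delta_k = \{\emptyset\}$ and the exchange condition degenerates to the trivially true statement $\emptyset \subseteq \emptyset$.
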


We will follow the notation of Lemma \ref{shelling_lemma} and let $[t]$ denote the index set for the order $\mathcal{O}$.  To work with Lemma \ref{shelling_lemma} in the proof of Theorem \ref{shelling_thm}, we will be fixing two facets $F_{i}$ and $F_{k}$ and constructing a facet $F_{j}$ satisfying the conditions of the lemma.  To make this easier, we now develop some notation for vertex subsets of $F_{i}$ and $F_{k}$.  

 Let $F_{k}$, for $k > 1$, be a facet of $\Delta(n)$.   Let $i$ be an index satisfying $1 \leq i < k \leq t$.  Let $\mathcal{V}_{i,k}$ denote the (possibly empty) set of vertices in $F_{i} \cap F_{k}$, let $\mathcal{V}_{k} = F_{k} \setminus F_{i}$, and let $\mathcal{V}_{i} = F_{i} \setminus F_{k}$.   Write $\mathcal{V}_{i,k} = \{ v_{c,1}, \ldots , v_{c,s} \}$, $\mathcal{V}_{k} = \{ v_{k,1}, \ldots , v_{k,e} \}$, and $\mathcal{V}_{i} = \{v_{i,1}, \ldots , v_{i,u} \}$. We write the vertex sets so that as positive integers, $(c,1) < \cdots < (c,s)$, $(k,1) <  \cdots < (k,e)$, and $(i,1) <  \cdots < (i,u)$.  Also, we write the indices of $\mathcal{V}_{i,k}$, $\mathcal{V}_{k}$, and $\mathcal{V}_{i}$ in the same order they appear in $F_{k}$ and $F_{i}$, and we do not rename the indices when considering the subsets $\mathcal{V}_{i,k} , \mathcal{V}_{i} $, and $\mathcal{V}_{k}$.

\begin{ex} Let $$F_{i} = \{ (1,2,1), (3,3,3), (4,4,4), (5,5,5) \}$$ and 

$$F_{k} = \{ (1,2,1), (2,3,3), (5,4,5) \}.$$   

Then $\mathcal{V}_{i,k} = \{ (1,2,1)\}$, $$\mathcal{V}_{k} = (2,3,3), (5,4,5) \}$$, and $$\mathcal{V}_{i} = \{(3,3,3), (4,4,4), (5,5,5) \}.$$  Also, $\{ (c,1) \} = \{ 1 \}$, $\{ (k,1), (k,2) \} = \{ 2, 3 \}$ and $\{  (i,1), (i,2), (i,3) \} = \{ 2,3,4 \}$.  

\end{ex}

\noindent The next lemma will make it easier to work with the sets $\mathcal{V}_{i,k}$, $\mathcal{V}_{i}$, and $\mathcal{V}_{k}$. 
 
\begin{lemma}\label{vertex_partition_lemma}
Let $F_{i}$ and $F_{k}$ be facets of $\triangle(n)$ such that $i < k \in [t]$.  There exist ordered partitions of $\mathcal{V}_{i,k}, \mathcal{V}_{i}$, and $\mathcal{V}_{k}$ into blocks of ordered vertices
$$
\mathcal{V}_{i,k} = C_{1} | \cdots | C_{N},
$$

$$
\mathcal{V}_{i} = I_{1} | \cdots | I_{M},
$$
and 
$$
\mathcal{V}_{k} = K_{1} | \cdots | K_{M}
$$
such that each ordered block of the ordered partitions corresponds to a consecutive subsequence of vertices in a facet. 

\end{lemma}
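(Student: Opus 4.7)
The plan is to exploit properties P1, P2, P3 from Lemma \ref{facets_lemma} to show that the ``gaps'' between consecutive common vertices (and at the start/end) in $F_i$ and $F_k$ are \emph{synchronized}: whenever one facet has non-common vertices filling such a gap, the other must as well.

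The central observation to establish first is a dichotomy. Enumerate $\mathcal{V}_{i,k} = \{v_{c,1}, \ldots, v_{c,s}\}$ in coordinatewise order, and for consecutive common vertices $v_{c,r}, v_{c,r+1}$ set $\delta = v_{c,r+1} - v_{c,r}$. If $\min \delta = 1$, then no vertex of $\Delta(n)$ can satisfy the strict coordinate inequalities required to lie between $v_{c,r}$ and $v_{c,r+1}$, so they are adjacent in both $F_i$ and $F_k$. If instead $\min \delta \geq 2$, then property P3 applied to $F_i$ forbids $v_{c,r}$ and $v_{c,r+1}$ from being adjacent in $F_i$, forcing at least one $\mathcal{V}_i$-vertex to appear between them in $F_i$; the symmetric statement for $F_k$ forces a $\mathcal{V}_k$-vertex between them in $F_k$.

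Using this dichotomy, I would define the $C$-block partition by declaring $v_{c,r}$ and $v_{c,r+1}$ to lie in the same block exactly when $\min\delta = 1$, yielding $\mathcal{V}_{i,k} = C_1 | \cdots | C_N$ with each $C_r$ automatically a consecutive subsequence of both $F_i$ and $F_k$. I would then let $I_1, \ldots, I_{M}$ be the maximal runs of $\mathcal{V}_i$-vertices in $F_i$ and $K_1, \ldots, K_{M'}$ the maximal runs of $\mathcal{V}_k$-vertices in $F_k$, and show $M = M'$. The dichotomy above pairs up the between-$C$-block gaps in $F_i$ and $F_k$. For the pre- and post-slots, I would argue by contradiction using P1 and P2: if some $w \in \mathcal{V}_i$ precedes $v_{c,1}$ in $F_i$ while $v_{c,1}$ is the first vertex of $F_k$, then $w$ is coordinatewise strictly less than $v_{c,1}$; but P2 applied to $F_k$ forces some coordinate of $v_{c,1}$ to equal $1$, leaving no legal value for that coordinate of $w$. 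A symmetric argument using P1 handles the post-slot. The edge case $\mathcal{V}_{i,k} = \emptyset$ is disposed of by taking $N = 0$, $M = 1$, $I_1 = F_i$, $K_1 = F_k$.

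The main obstacle will be the pre- and post-slot synchronization: the between-slot case falls out cleanly from P3, but the end cases require invoking P1 and P2, which constrain only the first and last vertex of each facet, to deduce that $v_{c,1}$ (respectively $v_{c,s}$) is the first (respectively last) vertex of $F_i$ exactly when it is for $F_k$. Once this is done, declaring $I_r$ and $K_r$ to be the aligned runs finishes the proof.
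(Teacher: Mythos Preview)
Your proposal is correct, and the underlying idea is the same as the paper's: properties P1, P2, P3 force each ``slot'' around the common vertices to be occupied by non-common vertices in $F_i$ exactly when it is in $F_k$. The presentations differ, though. The paper builds the three partitions by a straightforward scan (maximal consecutive runs in the relevant facet) and then proves $M=M'$ by contradiction: if $\mathcal{V}_i$ had more blocks than $\mathcal{V}_k$, some vertex of $F_i$ would fit strictly between two adjacent vertices of $F_k$ (or before its first vertex, or after its last), violating P3 (respectively P2, P1) for $F_k$; symmetry finishes it. You instead isolate the dichotomy $\min\delta = 1$ versus $\min\delta \geq 2$ at the outset, define the $C$-blocks intrinsically by this criterion, and then verify slot-by-slot that occupancy agrees in the two facets. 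Your route is a bit more explicit and has the side benefit of showing directly that each $C$-block is a consecutive run in \emph{both} $F_i$ and $F_k$, a point the paper's algorithmic description (which scans only $F_i$ to form the $C$-blocks) leaves implicit.
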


\begin{proof}
We can generate the required partitions of the vertices of $\mathcal{V}_{i}$, $\mathcal{V}_{k}$, and $\mathcal{V}_{i,k}$ using an algorithmic approach.  We will explain the algorithm $\mathcal{V}_{i,k} = C_{1} | \cdots | C_{N}$; the algorithms for $\mathcal{V}_{i}$ and $\mathcal{V}_{k}$ are similar.  

Let $F_{i} = \{ v_{1}, \ldots , v_{r} \}$.  If  $\mathcal{V}_{i,k}  = \emptyset$, then the partition is empty, and there is nothing to compute.  So, assume $\mathcal{V}_{i,k} \neq \emptyset$.  We use the following algorithm to build the blocks of the ordered partition $C_{1} | \cdots | C_{N}$.
\begin{alg} 

\begin{itemize}

\item Input:  The vertices $\{ v_{1}, \ldots , v_{r} \}$ of $F_{i}$ and the vertex subset $\mathcal{V}_{i,k}$.

\item Output: An ordered partition of  $\mathcal{V}_{i,k}$ of ordered blocks of vertices in $\mathcal{V}_{i,k}$, in which each block is a set of vertices that are both consecutive in $\{ v_{1}, \ldots , v_{r} \}$ and written in the order that they appear in $\{ v_{1}, \ldots , v_{r} \}$.

\item Initialize $\ell(1) = \min\{ \ell \in [r] : v_{\ell} \in \mathcal{V}_{i,k} \}$, set $C_{1} =  \{ v_{\ell(1)} \}$ .

\item While $\ell(i) < r$: 
\begin{itemize}

\item If $v_{\ell(i) + 1} \in \mathcal{V}_{i,k}$, set $C_{i} = C_{i} \cup \{ v_{\ell(i) + 1} \}$, update $\ell(i) = \ell(i) + 1$. 
\item Else if $v_{\ell(i) + 1} \notin \mathcal{V}_{i,k}$, set $v_{\ell(i)}$ as the last vertex in $C_{i}$. 

\begin{itemize}

\item If the set $\{ m \in [r] : m > \ell(i) \ \text{and} \ v_{m} \in \mathcal{V}_{i,k} \}$ is empty, $C_{i} = C_{N}$ and the algorithm terminates.  
\item Else update $\ell(i + 1)  = \min \{ m \in [r] :  m > \ell(i) \ \text{and} \ v_{m} \in \mathcal{V}_{i,k} \}$ and set $C_{i + 1} = \{ v_{\ell(i + 1)} \}$.

\end{itemize}

\end{itemize}

\item Return the partition $\mathcal{V}_{i,k} = C_{1} | \cdots | C_{N}$.  

\end{itemize}

\end{alg}

%Similar algorithms can be defined to construct $\mathcal{V}_{i} = I_{1} | \cdots | I_{M}$  and $\mathcal{V}_{k} = K_{1} | \cdots | K_{M}$.  

We explain why the partitions of $\mathcal{V}_{i}$ and $\mathcal{V}_{k}$ both have the same number of blocks $M$: assume by way of contradiction that the partition of $F_{i}$ has more blocks than the partition of $F_{k}$.  Then either $(i)$ there is at least one vertex in the sequence $\{v_{1} , \ldots , v_{r} \}$ that is between two vertices of $F_{k}$, $(ii)$ there is a vertex of $F_{i}$ greater than the last vertex of $F_{k}$, or $(iii)$ there is a vertex smaller than the first vertex of $F_{k}$.  Case $(i)$ implies $F_{k}$ does not satisfy P3, Case $(ii)$ implies that $F_{k}$ does not satisfy P1, and Case $(iii)$  implies $F_{k}$ does not satisfy P2.  So, all three cases are impossible by Lemma \ref{facets_lemma}.  So $F_{i}$ cannot have more blocks in the ordered partition than $F_{k}$.  The argument is symmetric in $F_{i}$ and $F_{k}$, so the ordered partitions of $F_{i}$ and $F_{k}$ have the same number of blocks.  \end{proof}

\begin{ex}
Here is an example of the ordered partitions with ordered blocks described in Lemma \ref{vertex_partition_lemma}. Define the facet  $F_{i}$ as  $$\{ (1,2,1), (2,3,3), (3,4,4), (5,5,5), (6,6,6), (7,8,8), (9,9,9), (10,10,10) \}$$ and let $$F_{k} = \{ (1,2,1), (2,3,5), (6,6,6), (7,8,9), (10,10,10) \}.$$ Note that $F_{i}$ and $F_{k}$ are both facets of $\Delta(10)$.   Then $$\mathcal{V}_{i,k} =  \{ (1,2,1), (6,6,6) ,(10,10,10) \},$$ $$\mathcal{V}_{i} = \{ (2,3,3), (3,4,4), (5,5,5), (7,8,8), (9,9,9) \},$$ and $$\mathcal{V}_{k} = \{ (2,3,5), (7,8,9) \}.$$ We have $$\mathcal{V}_{i,k} = C_{1} | C_{2} | C_{3} =  \{ (1,2,1) \} | \{(6,6,6) | \{ (10, 10, 10) \},$$ $$\mathcal{V}_{i} = I_{1} |  I_{2} = \{ (2,3,3), (3,4,4), (5,5,5), \} | \{  (7,8,8), (9,9,9) \},$$ and $$\mathcal{V}_{k} = K_{1} |  K_{2} = \{ (2,3,5) \} | \{ (7,8,9)  \}. $$  
\end{ex}

The next lemma is useful in our proof of Theorem \ref{shelling_thm}.  Recall that we write $\mathcal{V}_{k} = \{ v_{k,1}, \ldots , v_{k,e} \}$.

\begin{lemma}\label{non_empty_B}
Let $i < k$ be indices in the order $\mathcal{O}$.  There exists $\ell \in \{ (k,1) \ldots , (k,e) \}$ such that $B_{\ell} \neq \emptyset$.
\end{lemma}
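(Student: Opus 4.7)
The plan is to argue by contradiction: assume $B_\ell = \emptyset$ for every $\ell \in \{(k,1), \ldots, (k,e)\}$. Since $B_\ell = \emptyset$ forces each coordinate difference $(v_\ell)_c - (v_{\ell-1})_c$ to equal one (or forces $v_1 = (1,1,1)$ in the boundary case $\ell = 1$), this assumption says that every vertex of $\mathcal{V}_k$ is stacked tightly on its predecessor in $F_k$. I would then split on the two clauses of Definition \ref{O_defn} governing $i < k$ and derive a contradiction with one of the facet properties P1, P2, P3 of Lemma \ref{facets_lemma}.

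Case 1: $\dim F_i = \dim F_k$ with $\sigma(F_i) <_{\text{lex}} \sigma(F_k)$. Let $3(\ell - 1) + a$ be the first index at which the $\sigma$-words disagree. Then $v_1, \ldots, v_{\ell - 1}$ are common to both facets, and writing $v^i_\ell$ and $v_\ell$ for the $\ell$-th vertices of $F_i$ and $F_k$, their coordinates agree through position $a - 1$ and satisfy $(v^i_\ell)_a < (v_\ell)_a$. My first step is to show $v_\ell \notin F_i$, hence $v_\ell \in \mathcal{V}_k$ with index $\ell$ lying among $\{(k,1), \ldots, (k,e)\}$: if $v_\ell$ were in $F_i$ at some position $\ell' > \ell$, then for $\ell \geq 2$ the coordinate-strict chain $v_{\ell - 1} < v^i_\ell < \cdots < v_\ell$ in $F_i$ passes through at least two coordinate-wise unit jumps, forcing $(v_\ell)_c - (v_{\ell-1})_c \geq 2$ for every $c$ and contradicting P3 for $F_k$ at position $\ell$; for $\ell = 1$ the analogous chain paired with P2 on $F_k$ forces a coordinate of $v^i_1$ below $1$, which is impossible. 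With $v_\ell \in \mathcal{V}_k$ in hand, chaining $(v_\ell)_a \geq (v^i_\ell)_a + 1$ with either $(v^i_\ell)_a \geq (v_{\ell-1})_a + 1$ (when $\ell \geq 2$) or $(v^i_\ell)_a \geq 1$ (when $\ell = 1$) places coordinate $a$ in $B_\ell$, contradicting $B_\ell = \emptyset$.

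Case 2: $\dim F_i > \dim F_k$, so $|\mathcal{V}_i| > |\mathcal{V}_k|$. Lemma \ref{vertex_partition_lemma} gives both $\mathcal{V}_i$ and $\mathcal{V}_k$ the same number $M$ of ordered blocks, so some index $m$ satisfies $|I_m| \geq |K_m| + 1$. Writing $K_m = \{u_1, \ldots, u_{|K_m|}\}$ as a consecutive subsequence of $F_k$, the tight-stacking hypothesis gives $u_j = p + (j, j, j)$, where $p$ is the shared anchor of $F_k$ immediately preceding $K_m$ (with $p = (0, 0, 0)$ formally if $K_m$ starts $F_k$). If $K_m$ is followed in $F_k$ by a shared anchor $q \in \mathcal{V}_{i,k}$, the $F_i$-chain $p < u^i_1 < \cdots < u^i_{|I_m|} < q$ yields $q_c - p_c \geq |I_m| + 1 \geq |K_m| + 2$ for every $c$, so $q_c - (u_{|K_m|})_c \geq 2$ for every $c$, contradicting P3 of $F_k$ at $q$. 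If instead $K_m$ is the terminal block of $F_k$, then $(u^i_{|I_m|})_c \geq p_c + |I_m|$ for every $c$, and the coordinate $c^*$ where P1 of $F_k$ forces $p_{c^*} + |K_m| = n$ gives $(u^i_{|I_m|})_{c^*} \geq n + 1$, impossible since $\Delta(n)$ has vertex alphabet $[n]$.

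The main obstacle is the Case 2 bookkeeping: I need the block labelling of Lemma \ref{vertex_partition_lemma} to match up, with $I_m$ and $K_m$ genuinely sitting between the same pair of shared anchors of $\mathcal{V}_{i,k}$, since the coordinate comparison rests on this alignment. The boundary sub-cases (where $K_m$ starts or ends $F_k$) must also be handled with care, substituting the P1 or P2 argument in place of the P3 argument used in the generic middle sub-case.
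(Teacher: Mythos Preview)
Your proof is correct and follows the paper's two-case split (equal dimension versus $\dim F_i > \dim F_k$) using the same partition lemma (Lemma~\ref{vertex_partition_lemma}) in Case~2. A few remarks on how the two executions differ:

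In Case~1 you are more careful than the paper: you explicitly argue that the first differing vertex $v_\ell$ of $F_k$ cannot occur elsewhere in $F_i$, hence lies in $\mathcal{V}_k$, whereas the paper simply asserts that the first discrepancy ``occurs in the vertex of smallest index not present in both.'' Your argument via P3 (or P2 when $\ell=1$) fills that small gap. After that, both proofs exhibit the same coordinate $a$ in $B_\ell$.

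In Case~2 the mechanics diverge slightly. The paper picks one coordinate, bounds its net change across the block $K_A$ by $|I_A|$, and applies a direct pigeonhole to locate a jump $>1$ inside $K_A$. You instead use the contradiction hypothesis to pin down $u_j = p + (j,j,j)$ exactly and then contradict P3 (or P1) of $F_k$ at the anchor just past $K_m$. These are contrapositive versions of the same count; neither is more general, though your version makes the role of the facet axioms a bit more transparent.

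The alignment issue you flag --- that $I_m$ and $K_m$ sit between the same pair of anchors in $\mathcal{V}_{i,k}$ --- is real and is used (implicitly) in the paper's proof as well. It follows because if two common vertices are consecutive in one facet, P3 forbids any vertex strictly between them, so they are consecutive in the other facet too; together with the P1/P2 arguments you sketch for the boundary cases, this forces the $C$-block structure, and hence the interleaved $I$/$K$ block structure, to match in both $F_i$ and $F_k$. Once you write this down, your proof is complete.
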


\begin{proof}
First we handle the case where $\dim F_{i} = \dim F_{k}$. In this case $F_{i}$ and $F_{k}$ each have $r$ vertices, and the sequences  $\sigma(F_{i})$ and $\sigma(F_{k})$ are both of length $3r$.   By our construction of the shelling order $\mathcal{O}$ in Definition \ref{O_defn} this implies $\sigma(F_{i}) < \sigma(F_{k})$ in the lexicographic order which means the first place the two sequences differ, call this index $b \in [3r]$, is larger in $\sigma(F_{k})$.  

In other words, we have
$$
\sigma(F_{i}) = ( p_{1}, \ldots , p_{3r} )
$$
and
$$
 \sigma(F_{k}) = ( q_{1}, \ldots , q_{3r} )
 $$
 where $p_{a} = q_{a}$ for $a \in [3r]$ satisfying $a < b$, and $q_{b} > p_{b}$ as integers.  
 
The first place the sequences differ occurs in the vertex of smallest index not present in both $F_{i}$ and $F_{k}$.  So $q_{b} \in  \{i_{k,1}, j_{k,1}, k_{k,1} \}$ as $v_{k,1}$ is the vertex of smallest index in $\mathcal{V}_{k}$.  Without loss of generality, we can say $b$ designates the position of $i_{k,1}$.  Recall that we write $\mathcal{V}_{i} = \{v_{i,1}, \ldots , v_{i,u} \}$.   Then we have $ i_{k,1} > i_{i,1}$. If $(k,1) = (i,1) = 1$, then $i_{k,1} \geq 2$ and $B_{k,1} \neq \emptyset$.  If $(k,1) > 1$, $i_{(k,1) - 1}$ must appear in a vertex in $\mathcal{V}_{i,k}$ by our choice of $b$, and we can write $i_{k,1} - i_{(k,1)-1} > i_{i,1}- i_{(k,1)-1} \geq 1$.  So $B_{k,1} \neq \emptyset$ in this case.  

 %The general idea behind our treatment of this case is that when $F_{i}$ has more vertices than $F_{k}$, the index changes in $F_{k}$ 

Next we handle the case where $\dim F_{i} > \dim F_{k}$.  We can write $F_{i}$ and $F_{k}$ as the disjoint unions
$$
F_{k} = \mathcal{V}_{i,k} \sqcup \mathcal{V}_{k}, \quad F_{i} = \mathcal{V}_{i,k} \sqcup \mathcal{V}_{i}.
$$
%where the unions are disjoint.  
We know that $|\mathcal{V}_{i}| > |\mathcal{V}_{k}|$ because $\dim F_{i} > \dim F_{k}$.  By Lemma \ref{vertex_partition_lemma} there exist partitions of  $\mathcal{V}_{i,k}, \mathcal{V}_{i}$, and $\mathcal{V}_{k}$ 

$$
\mathcal{V}_{i,k} = C_{1} | \cdots | C_{N}, \quad  \mathcal{V}_{i} = I_{1} | \cdots | I_{M},
$$
and 
$$
\mathcal{V}_{k} = K_{1} | \cdots | K_{M}
$$
such that each block in each partition corresponds to an uninterrupted sequence of vertices in a facet.  Since $|\mathcal{V}_{i}| > |\mathcal{V}_{k}|$ and the ordered partitions of $\mathcal{V}_{i}$ and $\mathcal{V}_{k}$ have the same number of blocks, there must exist $A \in [M]$ such that $|I_{A}|  > |K_{A}|$.     For indices $x \in [u]$ and $y \in [e]$ we can write
$$
I_{A} = \{v_{i ,x}, \ldots , v_{i, (x+ |I_{A}|)} \}, \quad \text{and} \quad K_{A} = \{v_{k,y}, \ldots , v_{k,(y + |K_{A}|)} \}.
$$
Recall that the $\sigma$-word $\sigma(K_{A})$ (Definition \ref{sigma_defn}) is the ordered set of indices of all vertices appearing in the face $K_{A}$.  We divide the proof for $\dim F_{i} > \dim F_{k}$ into two  cases \begin{equation}\label{C1} n \in  \sigma(K_{A}),\end{equation}  and \begin{equation}\label{C2} n \notin  \sigma(K_{A}) .\end{equation}

Consider first the case (\ref{C1}). This case implies $$n \in  \{ i_{k,(y + |K_{M}|)}, j_{k, (y + |K_{M}|)}, k_{k, (y + |K_{M}|)} \},$$ the index set of the last vertex in the last block of the partition  $K_{1} | \cdots | K_{M}$ of $\mathcal{V}_{k}$.  We can assume without loss of generality that $n = i_{k,(y+ |K_{M}|)}$.  Then $n$ appears as an element of $v_{i, (x + |I_{M}|)} $ also.  Because of this, we know that in $F_{i}$, the vertices in $I_{M}$ immediately follow the vertices in $C_{N}$, and in $F_{k}$, the vertices in $K _{M}$ immediately follow the vertices in $C_{N}$.   For some $z \in [s]$, we can write $C_{N} = \{ v_{c,z}, \ldots , v_{c, (z + |C_{N}|)} \}$.    

Then $n - i_{c, (z + |C_{N}|)} \geq |I_{M}|$, and the net change in the $i$ index in the vertices of $K_{M}$ is bounded below by $|I_{M}| >  |K_{M}|$, and there are only $|K_{M}|$ vertices to accomplish this change. Therefore there must exist an index $\ell \in \{ (k,y), \ldots ,( k, y + |K_{M}|) \}$ such that $i_{\ell} - i_{\ell -1} > 1$.  So, for this $v_{\ell} \in \mathcal{V}_{k}$, $B_{\ell} \neq \emptyset$.  

Now we consider the case (\ref{C2}).  This implies that for some $w \in [s]$  there exists a vertex $v_{c,w} \in \mathcal{V}_{i,k}$ where $(c,w) =  (k,(y + |K_{A}|)) + 1$ in the vertex numbering in $F_{k}$.  Since $F_{k}$ is a facet, it satisfies P3 from Lemma \ref{facets_lemma}, which means that $\min \{i_{c,w} - i_{k, (y+ |K_{A}|)}, j_{c ,w} - j_{k, (y+ |K_{A}|)}, k_{c ,w} - k_{k, (y+ |K_{A}|)} \} = 1$.  Without loss of generality we can say $i_{c,w} - i_{k, (y+ |K_{A}|)} = 1$.  If $1 \in  \sigma(K_{A})$ then $A = 1$ and $i_{k, (y+ |K_{1}|)} \geq |I_{1}|$ where $|I_{1}| > |K_{1}|$, but we only have $|K_{1}|$ vertices to accomplish this index change and so there exists $\ell \in \{ 1, \ldots , (1 + |K_{1}|) \}$ such that $B_{\ell} \neq \emptyset$. 

If $1 \notin  \sigma(K_{A}) $, there exists $x \in [s]$ and $v_{c, x} \in \mathcal{V}_{i,k}$ such that $(c,x) + 1 = (k,y) $ in the label sequence of the vertices of $F_{k}$.  Then $i_{k, (y + |K_{A}|) } - i_{c, x} \geq |I_{A}|$ where $|I_{A}| > |K_{A}|$.  But we only have $|K_{A}|$ vertices to accomplish this index change and so there exists $\ell \in \{ (k, y), \ldots , (k, (y + |K_{A}|))  \}$ such that $B_{\ell} \neq \emptyset$.  This completes the proof of the Lemma for the case $\dim F_{i} > \dim F_{k}$.  \end{proof}

Now we prove Theorem \ref{shelling_thm}.  The essence of the proof is that given any pair of facets  $F_{i}$ and $F_{k}$ such that $i < k$ in $\mathcal{O}$, we may use Lemma \ref{non_empty_B} to construct a facet $F_{j}$ such that  the hypotheses of Lemma \ref{shelling_lemma} is satisfied, which will show that $\mathcal{O}$ is a shelling order.  

\begin{proof}
Let $F_{i}$ and $F_{k}$ be such that $i$ and $k$ satisfy $1 \leq i < k \leq t$ in the order $\mathcal{O}$.  Recall we write $\mathcal{V}_{i,k} = F_{i} \cap F_{k}$ and $\mathcal{V}_{k} = \{ v_{k, 1}, \ldots , v_{k, e} \}$,  where $\mathcal{V}_{k}  = F_{k} \setminus F_{i}$.  Write $F_{k} = \{ v_{1}, \ldots , v_{r} \}$.  We will find a vertex $v \in F_{k}$ and construct a facet $F_{j}$ such that $1 \leq j < k $ and such that $\mathcal{V}_{i,k} \subset F_{j} \cap F_{k} = F_{k} \setminus \{ v \}$.  This will show that $\mathcal{O}$ is a shelling order by Lemma \ref{shelling_lemma}.  

By Lemma \ref{non_empty_B}  there exists  $\ell \in \{ (k,1), \ldots , (k, e) \}$ such that $B_{\ell} \neq \emptyset$.   We will divide the proof into two cases:  $\ell = r$ and $\ell < r$.   For now assume that $\ell < r$.   If such an $\ell$ exists we choose $\ell$ that is minimal.   

Then choose the ``left-most" vertex element in $B_{\ell}$: for example if $B_{\ell} =  \{ i_{\ell}, k_{\ell} \}$ we choose $i_{\ell}$. Without loss of generality we can say that $i_{\ell}$ is the left-most element of the set $B_{\ell}$.  Let $w = (i_{\ell} -1, j_{\ell}, k_{\ell} )$.   Since $F_{k}$ is a facet, we know that $\min \{ i_{\ell + 1} - i_{\ell},  j_{\ell + 1} - j_{\ell},  k_{\ell + 1} - k_{\ell} \} = 1$.  We now have two sub-cases to consider:  (i):  $\min \{  j_{\ell + 1} - j_{\ell},  k_{\ell + 1} - k_{\ell} \} = 1$ and (ii): $\min \{  j_{\ell + 1} - j_{\ell},  k_{\ell + 1} - k_{\ell} \} > 1$.  In the case (i), the down-twist (Definition \ref{down_twist}) about $v_{\ell}$ 
  $$
  F_{j} = F_{k} \setminus \{v_{\ell} \} \cup \{w\}
  $$
  is safe and $F_{j}$ is a facet.  
  
 In this instance $\dim F_{j} = \dim F_{k}$.  The only place $\sigma(F_{j})$ and $\sigma(F_{k})$ differ is the position of $i_{\ell}-1 $ from the new vertex $w$.  So $\sigma(F_{j}) < \sigma(F_{k})$ and we know $j < k$ in the order $\mathcal{O}$.  Also, $ \mathcal{V}_{i,k} \subset F_{j} \cap F_{k} = F_{k} \setminus \{v_{\ell} \}$, so $ v_{\ell}$ and $F_{j}$ satisfy the conditions of Lemma \ref{shelling_lemma}.

 Next, consider the sub-case (ii): $\min \{  j_{\ell + 1} - j_{\ell},  k_{\ell + 1} - k_{\ell} \} > 1$.  Since $F_{k}$ is a facet, P3 is satisfied and $i_{\ell + 1} - i_{\ell} = 1$ must hold.  In this case the face
 $$
 F_{j} = F_{k} \setminus \{ v_{\ell} \} \cup \{ w , (i_{\ell}, j_{\ell} + 1, k_{\ell}+ 1) \}
 $$
 satisfies P3 and is a facet.  Since $\dim F_{j} > \dim F_{k}$, $j < k$ in $\mathcal{O}$.  Clearly $\mathcal{V}_{i,k} \subset F_{j} \cap F_{k} = F_{k} \setminus \{ v_{\ell} \}$.  
 
Next we consider the case where the only $\ell \in \{(k,1), \ldots, (k,e) \}$ satisfying $B_{\ell} \neq \emptyset$ is $\ell = r$.  Again without  loss of generality we can say that $i_{r}$ is the left-most index in $B_{r}$.  Let $w = (i_{r} -1, j_{r}, k_{r} )$.   There are two sub-cases to consider: (i) $n \in \{j_{r}, k_{r} \}$ and (ii) $n \notin \{j_{r}, k_{r} \}$.
 
 If (i) $n \in \{j_{r}, k_{r} \}$, then the down-twist about $v_{r}$
 $$
 F_{j} = F_{k} \setminus \{ v_{r} \}  \cup \{ w\}
 $$
 is safe and $F_{j}$ is a facet of the same dimension as $F_{k}$ satisfying $\sigma(F_{j}) < \sigma(F_{k})$ and so $j < k$ in $\mathcal{O}$.  For the sub-case (ii) when  $n \notin \{j_{r}, k_{r} \}$, let 
 $$
 F_{j} = F_{k} \setminus \{ v_{r} \} \cup \{ w, (n, n, n) \}.
 $$
 Since $\dim F_{j} > \dim F_{k}$, we have $j < k $ in $\mathcal{O}$.  In both sub-cases $\mathcal{V}_{i,k} \subset F_{j} \cap F_{k} = F_{k} \setminus \{ v_{r} \}$.  This completes the proof.   \end{proof}

%%%%%%%%%%%%%%%%%%%%%%%%%%%%%%%%%%%%%%%%%%%%%%%%%%%%%%%%%%%%%%%%%%%%%%%%%%%%%%%%%%%%%%%%%%%%%%%%%%%%%%%%%%%%%%%%%%%%%%%%%%
\section{The Homology Facets of $\Delta(n)$}\label{homology_section}

Recall that our approach to our new proof (Section \ref{approach_section}) is to calculate the Betti numbers (Definition \ref{Betti_defn}) of $\Delta(n)$ using shelling order  $\mathcal{O}$ in which puts $\Delta(n)$ together as a topological space. The next lemma characterizes the homology facets (Definition \ref{homology_facet_defn}) of $\Delta(n)$ for dimension 1 and greater:

\begin{lemma}\label{homology_facets_lemma}
Let $r \geq 2$.  A facet $F_{k} = \{ v_{1} , \ldots , v_{r } \}$ is a homology $(r-1)$-facet of $\Delta(n)$ if and only if $B_{\ell} \neq \emptyset$ for all $\ell \in [r ]$.  
\end{lemma}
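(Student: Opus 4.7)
The proof proceeds in two directions, both centered on the interplay between the down-twist construction and the order $\mathcal{O}$.

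For the $(\Leftarrow)$ direction, assume $B_\ell \neq \emptyset$ for every $\ell \in [r]$. It suffices to produce, for each $\ell \in [r]$, a facet $F_j$ with $F_j < F_k$ in $\mathcal{O}$ that contains the $(r-2)$-dimensional sub-face $F_k \setminus \{v_\ell\}$, because every proper sub-face of $F_k$ is then contained in some such $F_k \setminus \{v_\ell\} \subseteq \overline{F_j}$. Fix $\ell$, pick the left-most coordinate $b_\ell \in B_\ell$, and without loss of generality take $b_\ell = i_\ell$. I would first attempt the simple down-twist $v_\ell \mapsto v_\ell' := (i_\ell - 1, j_\ell, k_\ell)$: whenever this down-twist is safe in the sense of Definition \ref{safe_twist}, the set $F_j := (F_k \setminus \{v_\ell\}) \cup \{v_\ell'\}$ is a facet of the same dimension as $F_k$ by Lemma \ref{twisting_lemma}, with $\sigma$-word strictly lexicographically smaller than $\sigma(F_k)$, so $F_j < F_k$ in $\mathcal{O}$. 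When the down-twist fails to be safe---precisely when $\ell < r$ and $\min\{j_{\ell+1} - j_\ell, k_{\ell+1} - k_\ell\} > 1$, or when $\ell = r$ and $n \notin \{j_r, k_r\}$---I would use the expanded constructions from the proof of Theorem \ref{shelling_thm}: set $F_j = (F_k \setminus \{v_\ell\}) \cup \{w, (i_\ell, j_\ell + 1, k_\ell + 1)\}$ with $w = (i_\ell - 1, j_\ell, k_\ell)$ in the first case, and $F_j = (F_k \setminus \{v_r\}) \cup \{w, (n, n, n)\}$ in the second. These are facets of strictly higher dimension than $F_k$, so they precede $F_k$ in $\mathcal{O}$ by Definition \ref{O_defn}, and $F_j \supseteq F_k \setminus \{v_\ell\}$ in each case.

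For the $(\Rightarrow)$ direction I argue the contrapositive: if $B_\ell = \emptyset$ for some $\ell \in [r]$, then $F_k \setminus \{v_\ell\}$ lies in no earlier facet. The hypothesis says $v_\ell = v_{\ell-1} + (1,1,1)$ when $\ell > 1$, or $v_1 = (1,1,1)$ when $\ell = 1$. Set $G := F_k \setminus \{v_\ell\}$ and suppose, toward contradiction, that $F_j \supseteq G$ is a facet with $F_j < F_k$ in $\mathcal{O}$. Since $G$ itself fails to be a facet (P3 fails at the $\ell$-slot for $1 < \ell < r$, P2 fails for $\ell = 1$, or P1 fails for $\ell = r$), $F_j \supsetneq G$. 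I would then show that no vertex of $F_j \setminus G$ sits in any slot of $G$ other than the $\ell$-slot: for slots of the form $(v_p, v_{p+1})$ with $\ell \notin \{p, p+1\}$, the P3 condition on $F_k$ forces $\min\{i_{p+1} - i_p, j_{p+1} - j_p, k_{p+1} - k_p\} = 1$, leaving no room for a vertex strictly between $v_p$ and $v_{p+1}$; for the pre-$v_1$ slot (when $\ell > 1$) and the post-$v_r$ slot (when $\ell < r$), P2 and P1 respectively block any insertion. Thus every vertex of $F_j \setminus G$ lies in the $\ell$-slot, and a coordinatewise count shows only one such insertion is possible: two vertices $v^{(1)} < v^{(2)}$ in the $\ell$-slot (componentwise strict) would force the coordinate-gap minimum across the slot to be at least $3$, whereas the hypothesis $v_\ell - v_{\ell-1} = (1,1,1)$ combined with $\min\{i_{\ell+1} - i_\ell, j_{\ell+1} - j_\ell, k_{\ell+1} - k_\ell\} = 1$ (from P3 on $F_k$) forces that minimum to equal exactly $2$ for $1 < \ell < r$; the end cases $\ell = 1$ and $\ell = r$ are handled analogously using P2 and P1. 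So $F_j = G \cup \{v^*\}$ for a single vertex $v^*$, and $\dim F_j = \dim F_k$. The strict componentwise inequality $v^* > v_{\ell-1}$ (when $\ell > 1$) forces $v^* \geq v_{\ell-1} + (1,1,1) = v_\ell$ coordinatewise; when $\ell = 1$, each coordinate of $v^*$ is at least $1$, so $v^* \geq v_1$ coordinatewise as well. Hence $v^* \geq v_\ell$ lexicographically, with equality iff $v^* = v_\ell$ (which would give $F_j = F_k$). Thus $\sigma(F_j) \geq \sigma(F_k)$ and $F_j$ is not earlier than $F_k$, a contradiction.

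The main obstacle will be the careful handling of the cases $\ell = 1$, $1 < \ell < r$, and $\ell = r$ in each direction: in $(\Leftarrow)$ the down-twist fails safety for different reasons (P3 at interior positions, P1 at the top), each resolved by a tailored expanded construction, while in $(\Rightarrow)$ the slot-blocking argument invokes P2, P3, or P1 depending on where $v_\ell$ sits. The core combinatorial fact---that a tight $(1,1,1)$-gap in a strictly increasing chain admits no interior vertex, while the slightly wider gap left behind when $B_\ell = \emptyset$ admits exactly one---drives both directions and is the heart of the argument.
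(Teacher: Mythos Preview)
Your proposal is correct and follows essentially the same approach as the paper. The $(\Leftarrow)$ direction mirrors the paper's constructions (indeed, the very ones used in proving Theorem~\ref{shelling_thm}), and your $(\Rightarrow)$ direction is the same argument reorganized: you first bound the number of insertable vertices in the $\ell$-slot to rule out $\dim F_j > \dim F_k$ and then handle the equal-dimension case by the coordinatewise comparison $v^* \geq v_\ell$, whereas the paper splits on $\dim F_j$ first and reaches the identical contradictions.
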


\begin{proof}
First let $B_{\ell} \neq \emptyset$ for all $\ell \in [r]$.  It suffices to show that for all $\ell$,  $F_{k} \setminus \{ v_{\ell} \} \subset F_{j(\ell)}$ for some $j(\ell) < k$.  First, let $\ell = r$.  If at least two of the elements of the set  $  \{ i_{r}, j_{r}, k_{r}\}$ are equal to $n$, then since $B_{r} \neq \emptyset$, we can say without loss of generality that $i_{r} - i_{r-1} > 1$.  Then  the facet 
$$
F_{j(\ell)} = F_{k} \setminus \{v_{r} \} \cup \{ (i_{r} - 1, j_{r}, k_{r}) \}
$$
satisfies $\dim F_{j(\ell)} = \dim F_{k}$ and $\sigma(F_{j(\ell)}) < \sigma(F_{k})$, so $j(\ell) < k$ and we have the desired containment $F_{k} \setminus \{ v_{\ell} \} \subset F_{j(\ell)}$.  If $B_{r} = \{n\}$, without loss of generality we can say that $B_{r} = \{i_{r}\}$. Then $\min\{j_{r}, k_{r} \}  = n-p$ for some $p \geq 1$.  Let $F_{j(\ell)}$ be defined as 
$$
F_{k} \setminus \{v_{r} \} \cup \{(n-p, n-p, n-p),(n-p + 1, n-p + 1, n-p + 1), \ldots , (n,n,n)\}.
$$
Then $\dim F_{j(\ell)} > \dim F_{k}$, so $j(\ell) < k$ and $F_{k} \setminus \{ v_{r} \} \subset F_{j(\ell)}$.  

Now, let $\ell < r$. Either $|B_{\ell}| = 1$ or $|B_{\ell}| = 2$.    (Since $F_{k}$ is a facet and satisfies P3,  $|B_{\ell}| < 3$).  If $|B_{\ell}|  = 2$ we can assume $B_{\ell} = \{i_{\ell}, j_{\ell} \}$.  If $\min\{ j_{\ell + 1} - j_{\ell},  k_{\ell + 1} - k_{\ell} \} = 1$, then let
$$
F_{j(\ell)} = F_{k} \setminus \{v_{\ell} \} \cup \{ (i_{\ell} - 1, j_{\ell}, k_{\ell}) \}.
$$
Then $\sigma(F_{j(\ell)}) < \sigma(F_{k})$, with $F_{k} \setminus \{v_{\ell} \} \subset F_{j(\ell)}$ and $\dim F_{k} = \dim F_{j(\ell)}$, so $j(\ell) < k$.  If $\min\{ j_{\ell + 1} - j_{\ell},  k_{\ell + 1} - k_{\ell} \} > 1$ then $i_{\ell + 1} - i_{\ell} = 1$ because $F_{k}$ is a facet and satisfies P3.  Then let 
$$
F_{j(\ell)} = F_{k} \setminus \{v_{\ell} \} \cup \{ (i_{\ell} - 1, j_{\ell} , k_{\ell} ), (i_{\ell} , j_{\ell} + 1, k_{\ell} + 1) \}
$$
and again $\dim F_{j(\ell)} > \dim F_{k}$, so $j(\ell) < k$ and $F_{k} \setminus \{ v_{\ell} \} \subset F_{j(\ell)}$.  

If $|B_{\ell} |= 1$, then we can assume $B_{\ell} = \{i_{\ell} \}$.  If $ i_{\ell + 1} - i_{\ell} > 1$, then 
$$
F_{j(\ell)} = F_{k} \setminus \{v_{\ell} \} \cup  \{ (i_{\ell} - 1, j_{\ell}, k_{\ell}) \} 
$$
satisfies P3 (because $F_{k}$ does), $\dim F_{k} = \dim F_{j(\ell)}$ and $\sigma(F_{j(\ell)}) < \sigma(F_{k})$,  so $j(\ell) < k$ and $F_{k} \setminus \{ v_{\ell} \} \subset F_{j(\ell)}$. If $i_{\ell + 1} - i_{\ell} = 1$ and $\min \{j_{\ell + 1} - j_{\ell}, k_{\ell + 1} - k_{\ell} \} > 1$, then 
$$
F_{j(\ell)} = F_{k} \setminus \{v_{\ell} \} \cup \{ (i_{\ell} - 1, j_{\ell}-1, k_{\ell}-1), (i_{\ell}, j_{\ell}, k_{\ell}) \}
$$
satisfies $\dim F_{k} < \dim F_{j(\ell)}$ so $j(\ell) < k$ and $F_{k} \setminus \{v_{\ell} \} \subset F_{j(\ell)}$. So whenever $B_{\ell}  \neq \emptyset$ for all $\ell \in [r]$, $F_{k}$ attaches along its entire boundary in the shelling order $\mathcal{O}$ and is a homology facet. 

For the converse, assume that $F_{k} = \{v_{1}, \ldots , v_{r} \}$ is a homology facet.  We wish to show that $B_{\ell} \neq \emptyset$ for all $\ell \in [r]$.  Assume by way of contradiction that there exists $\ell \in [r]$ where $B_{\ell} = \emptyset$.  Since $F_{k}$ is a homology facet, $F_{k} \setminus \{ v_{\ell} \} \subset F_{j(\ell)}$ for some $j(\ell) < k$.  If $\dim F_{k} = \dim F_{j(\ell)}$, then 
$$
F_{j(\ell)} = F_{k} \setminus \{ v_{\ell} \} \cup \{ v'_{\ell} \} 
$$ 
for some $v'_{\ell} \neq v_{\ell}$, and  $\sigma(F_{j(\ell)}) < \sigma(F_{k})$.  Then since the only entries in the sequences $\sigma(F_{j(\ell)})$ and $ \sigma(F_{k})$ that are different come from $v_{\ell}$ and $v'_{\ell}$, one of the three inequalities $(i) \  i'_{\ell} < i_{\ell}$, $(ii) \ j'_{\ell} < j_{\ell}$, or $(iii) \ k'_{\ell} < k_{\ell}$ must be true.  If $i'_{\ell} < i_{\ell} = i_{\ell - 1} + 1$, then this is a contradiction because $i'_{\ell} > i_{\ell - 1}$.  The same contradiction arises if inequalities $(ii)$ or $(iii)$ hold.  

If $\dim F_{j(\ell)} > \dim F_{k}$, then 
$$
F_{j(\ell)} = F_{k} \setminus \{ v_{\ell} \} \cup \{ v_{a, 1}, \ldots , v_{a, d } \} 
$$
where $ d \geq 2$.  
First consider the sub-case where $\ell = r$.    In this instance, $n \in \{ i_{r}, j_{r}, k_{r} \}$.  Without loss of generality we can say $n = i_{r}$.   Since $B_{r} = \emptyset$, $i_{r-1} = n-1$.  Then we must have $n-1 < i_{a, 1} < i_{a, 2}$ and $i_{a, 1} = n$, but since $n$ is the maximum index allowed, this is a contradiction.  Next, consider the sub-case where $\ell < r$.  Then since $F_{k}$ satisfies P3, $\min\{i_{\ell + 1} -i_{\ell}, j_{\ell + 1} -j_{\ell}, k_{\ell + 1} -k_{\ell} \} = 1$.  Without loss of generality, we assume $i_{\ell + 1} - i_{\ell} = 1$. Since $B_{\ell} = \emptyset$, $i_{\ell} - i_{\ell-1} = 1$.  But we must have $i_{\ell -1} < i_{a,1} < i_{a, 2} < i_{\ell} + 1$, which is impossible.  Therefore we have also arrived at a contradiction when $\dim F_{j(\ell)} > \dim F_{k}$.  So when $F_{k}$ is a homology facet, $B_{\ell} \neq \emptyset$ for  all $\ell \in [r]$.  
\end{proof}

%Lemma \ref{homology_vertices_lemma} characterizes the homology facets of dimension at least 1 in $\Delta(n)$, 

%Recall that for $n \geq 1$, $\Delta(n)$ has a collection of isolated vertices with index set of type $\{1, n, k\}$ where $k \in [n]$.   Then, as a shellable simplicial complex has the homotopy type of a wedge of spheres, only a wedge including $0$-spheres can increase the number of connected components to more than 1.  In fact, we will obtain one additional connected component for each isolated vertex, so we refer to the isolated vertices of $\Delta(n)$ as \emph{homology vertices}. Similarly, we say  one-dimensional homology facets of $\Delta(n)$ are \emph{homology edges}, and two-dimensional homology facets of $\Delta(n)$ are \emph{homology triangles}.  The next lemma shows how to count the homology vertices. 

%\begin{lemma}\label{homology_vertices_lemma}
%There are $6(n-1)$ homology vertices in the simplicial complex $\Delta(n)$.  %i.e. $\beta_{0}(\Delta(n)) = 6(n-1)$.  
%\end{lemma}

%\begin{proof}
%When $n = 2$, the 6 homology vertices are $(1,1,2), (1,2,1), (1,2,2), (2,1,1), (2,1,2)$, and $(2,2,1)$. For each $n$, there are 6 homology vertices of this form: $$ (1,1,n), (1,n,1), (1,n,n), (n,1,1), (n,1,n), (n,n,1).$$  Let $S_{3}$ denote the symmetric group on 3 letters. Then, for $n \geq 3$, for each $k \notin \{1,n\}$, there are 6 homology vertices for the $|S_{3}| = 6$ vertices using the index set $\{1,k, n\}$.  There are $n-2$ choices for each such $k$, so there are $6 + 6(n-2) =  6(n-1)$ homology vertices in $\Delta(n)$.
%\end{proof}

\section{Generating Functions that Count the Homology Facets}\label{sec:gen_functions}

In this section we complete our new proof of (\ref{DixID}) by showing the homology facets of $\Delta(n)$ can be counted using generating functions and an application of MacMahon's Master Theorem (Theorem \ref{master}).  For our argument, is useful to consider two families of homology facets, which we now define.

Denote the homology facets of dimension $d$ in $\Delta(n)$ as $H_{d}(\Delta(n))$. We can divide $H_{d}(\Delta(n))$ into two families $X_{d}(n)$ and $Y_{d}(n)$, where $X_{d}(n)$ is the set
$$
\{F  = \{ v_{1}, \ldots , v_{d+ 1} \}  |  a_{d+1} < n \text{ for   some} \ a_{d+1} \in \{ i_{d+1}, j_{d+1}, k_{d+1} \} \} 
$$
and 
$$
Y_{d}(n) = \{F  = \{ v_{1}, \ldots , v_{d+1} \}  | F = G  \in X_{d-1}(\Delta(n-1)) \} \cup \{ (n, n, n)\} \}.
$$

We let  $X(n)$ (respectively $Y(n)$) denote the set $\bigcup_{d = 0}^{n} X_{d}(n)$ and $X$ (respectively $Y$) denote the set $\bigcup_{n = 1}^{\infty} X(n)$.

%So we can write $X(n) = \bigcup_{r = 0}^{d-2} X_{r}()$

%In other words, $X(n) = \bigcup_{r = 0}^{d-2} X_{r}(d)$ is the set of homology facets where the vertex of maximal index has an entry less than $n$, and $Y(n) = \bigcup_{r = 0}^{n-2} Y_{r}(n)$ is the set of homology facets obtained by adding the vertex $(n,n,n)$ to a homology facet of $\Delta(n-1)$.  %We hope this will be useful to obtain $H(\Delta_{n})$ in terms of lower-dimensional homology facets of $\Delta(k)$ for $k < n$.  

\begin{lemma}\label{lemma:two_fams}
For $0 \leq d \leq n-1$, $H_{d}(\Delta(n)) = X_{d}(n) \sqcup Y_{d}(n)$. %, and $|X_{d-1}(n-1)| = |Y_{d}(n)|$.
\end{lemma}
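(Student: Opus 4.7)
The plan is to prove the decomposition $H_{d}(\Delta(n)) = X_{d}(n) \sqcup Y_{d}(n)$ by a direct case split on the last vertex $v_{d+1}$ of a homology facet. The key observation is that, since every facet of $\Delta(n)$ satisfies property P1 from Lemma \ref{facets_lemma}, at least one coordinate of $v_{d+1}$ equals $n$. So there is a clean dichotomy: either some coordinate of $v_{d+1}$ is strictly less than $n$ (putting $F$ into $X_{d}(n)$ by definition), or all three coordinates equal $n$, i.e.\ $v_{d+1} = (n,n,n)$ (the candidates for $Y_{d}(n)$). Disjointness $X_{d}(n) \cap Y_{d}(n) = \emptyset$ is then immediate from this dichotomy.

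For the forward containment $H_{d}(\Delta(n)) \subseteq X_{d}(n) \cup Y_{d}(n)$, I would take a homology facet $F = \{v_{1}, \ldots, v_{d}, (n,n,n)\}$ whose last vertex is $(n,n,n)$ and set $G = F \setminus \{(n,n,n)\}$. The plan is to verify that $G$ is a homology $(d-1)$-facet of $\Delta(n-1)$ and that it actually lies in $X_{d-1}(\Delta(n-1))$. Facet-hood of $G$ in $\Delta(n-1)$ reduces cleanly to Lemma \ref{facets_lemma}: property P2 and the first $d-1$ instances of P3 are inherited verbatim from $F$, while P1 for $G$ in $\Delta(n-1)$, namely $\max\{i_{d},j_{d},k_{d}\} = n-1$, follows from P3 for $F$ at position $d$, since $\min\{n-i_{d}, n-j_{d}, n-k_{d}\} = 1$. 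The homology-facet conditions $B_{\ell} \neq \emptyset$ for $\ell \in [d]$ transfer directly from $F$ to $G$ because the sets $B_{\ell}$ of Definition \ref{down_twist} do not depend on the ambient $n$. Finally, to see $G \in X_{d-1}(\Delta(n-1))$, I would invoke $B_{d+1} \neq \emptyset$ for $F$ via Lemma \ref{homology_facets_lemma}: this produces a coordinate of $v_{d}$ that differs from $n$ by more than $1$, hence lies strictly below $n-1$.

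The reverse containment $Y_{d}(n) \subseteq H_{d}(\Delta(n))$ runs the same argument in reverse. Given $G \in X_{d-1}(\Delta(n-1))$, I would build $F = G \cup \{(n,n,n)\}$ and check that (i) the strict-increase condition of Definition \ref{the_complex} at the new vertex is automatic because $v_{d}$ has all coordinates at most $n-1$; (ii) P1 and P2 for $F$ are immediate, while P3 at the new position $d$ follows from P1 for $G$ in $\Delta(n-1)$; and (iii) the new homology-facet condition $B_{d+1} \neq \emptyset$ holds because some coordinate of $v_{d}$ is strictly less than $n-1$, producing a gap to $n$ of at least $2$. The containment $X_{d}(n) \subseteq H_{d}(\Delta(n))$ is by definition.

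The main obstacle I anticipate is not any single step but rather the bookkeeping among three overlapping sets of conditions: the properties P1/P2/P3 defining facet-hood, the conditions $B_{\ell} \neq \emptyset$ defining a homology facet, and the ``some coordinate less than $n$'' (respectively $n-1$) condition defining the families $X_{d}$. The crucial dictionary entry is that P1 for the truncated face $G$ in the smaller complex $\Delta(n-1)$ is equivalent to P3 at the last gap of $F$ in $\Delta(n)$; once this correspondence is in place, the remainder is routine verification. A minor degenerate case to mention is $d = 0$: then $Y_{0}(n) = \emptyset$ (since single-vertex facets must satisfy both $\min = 1$ and $\max = n$, forcing them into $X_{0}(n)$ whenever $n \geq 2$), so the lemma holds trivially in that dimension.
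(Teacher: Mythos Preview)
Your proposal is correct and follows the same case split on the last vertex as the paper, but it is considerably more thorough: the paper's own proof only verifies $X_d(n)\cup Y_d(n)\subseteq H_d(\Delta(n))$ and disjointness, while you also carry out the forward containment $H_d(\Delta(n))\subseteq X_d(n)\cup Y_d(n)$ carefully (showing $G=F\setminus\{(n,n,n)\}$ lands in $X_{d-1}(\Delta(n-1))$). One small technicality: your transfer of the condition ``$B_\ell\neq\emptyset$ for all $\ell$'' via Lemma~\ref{homology_facets_lemma} is stated there only for $r\ge 2$, so the case $d=1$ (where $G$ is a single vertex) needs the same kind of direct check you already give for $d=0$; this is routine and does not affect the argument.
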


\begin{proof}
The facets in $X_{d}(n)$ all satisfy $B_{\ell} \neq \emptyset$ (see Definition \ref{down_twist} for a reminder of this notation) for $0 \leq \ell \leq d+1$ by assumption, and by adding the vertex $(n, n, n)$ to a face in $X_{d-1}(n)$, we see that $B_{d + 1} \neq \emptyset$. Therefore every facet in $X_{d}(n)$ and $Y_{d}(n)$ is a homology facet (Lemma \ref{homology_facets_lemma}).  The two sets are disjoint because no face in $X_{d-1}(n-1)$ has $(i_{d}, j_{d}, k_{d}) = (n-1, n-1, n-1)$ by definition. 
\end{proof}

To complete our new proof of (\ref{DixID}),  we must count $H_{d}(\Delta(n))$ for $d \leq n-1$, and show that their alternating sum gives the required right hand side of Equation (\ref{DixID}). 

\begin{defn}\label{shift_vector_defn}
Given a facet $F \in \Delta(n)$ with $r$ vertices, fix $\lambda^{i}, \lambda^{j}$, and $\lambda^{k}$ to represent the collections of indices sorted by position $(i, j,k)$.   With the collection $\lambda^{i}$ we identify the \emph{shift vector} $\overline{\lambda_{i}}$ as $(i_{1}, i_{2}-i_{1}, \ldots , i_{r}- i_{r -1}, n +1 - i_{r})$, with  $\overline{\lambda_{j}}$  and  $\overline{\lambda_{k}}$ defined similarly.
\end{defn}

\begin{ex}
The facet $\{(1,2,2), (2,4,4)\}$ in $\Delta(5)$ has shift vectors $\overline{\lambda_{i}} = (1, 1, 3)$, $\overline{\lambda_{j}} = \overline{\lambda_{k}} = (2, 2, 1)$.
\end{ex}

\begin{rmk}\label{compostition_rmk}
It is clear that every facet is uniquely identified by its shift vectors, and that the shift vectors for a facet with $r-1$ vertices have $r$ entries: the facet $\{(1,2,2), (2,4,4)\}$ in $X_{1}(4)$ has shift vectors $\overline{\lambda_{i}} = (1, 1, 3)$, $\overline{\lambda_{j}} = \overline{\lambda_{k}} = (2, 2, 1)$. 
\end{rmk}

\begin{lemma}\label{NewGenFunctionLemma}
The generating function for a collection $\{\overline{\lambda_{i}}, \overline{\lambda_{j}}, \overline{\lambda_{k}}\}$ of shift vectors derived from facets in the family $X$ with $r-1$ vertices satisfying $$ \sum_{s_{\ell} \in \overline{\lambda_{i}}} s_{\ell} = n_{1}, \sum_{s_{\ell} \in \overline{\lambda_{j}}} s_{\ell} = n_{2} \ \ \text{and}  \sum_{s_{\ell} \in \overline{\lambda_{k}}} s_{\ell} = n_{3} $$ is \[g_r(x,y,z) = \sum_{n_1,n_2,n_3=1}^\infty X(n_1,n_2,n_3)x^{n_1}y^{n_2}z^{n_3} = $$ $$ \left(xyz\left(\frac{1-xyz}{(1-x)(1-y)(1-z)}-1\right)\right)^{r}.  \]
\end{lemma}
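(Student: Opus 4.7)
The plan is to show that the three shift vectors associated to an $X$-facet with $r-1$ vertices amount to $r$ independently chosen triples of positive integers satisfying a common per-position constraint, so that $g_r$ factors as the $r$-th power of a single-position generating function.

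By Remark \ref{compostition_rmk}, a facet with $r-1$ vertices is uniquely determined by three shift vectors, each of length $r$. Writing the $\ell$-th entries of $\overline{\lambda_i}, \overline{\lambda_j}, \overline{\lambda_k}$ as $(a_\ell, b_\ell, c_\ell)$, I will argue that at every position $\ell \in [r]$ the triple must satisfy
\begin{enumerate}
\item $a_\ell, b_\ell, c_\ell \geq 1$,
\item $\min(a_\ell, b_\ell, c_\ell) = 1$, and
\item $\max(a_\ell, b_\ell, c_\ell) \geq 2$,
\end{enumerate}
and conversely, any sequence of $r$ triples satisfying (1)--(3) arises from a valid shift-vector triple. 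Positivity is immediate from the strict monotonicity of the original coordinate sequences. Condition (2) unpacks into three sub-cases supplied by Lemma \ref{facets_lemma}: P2 gives $\min = 1$ at $\ell = 1$; P3 gives it at every interior $\ell$; and P1 gives it at $\ell = r$ after rewriting $\max\{i_{r-1}, j_{r-1}, k_{r-1}\} = n$ as $\min\{n + 1 - i_{r-1}, n + 1 - j_{r-1}, n + 1 - k_{r-1}\} = 1$. Condition (3) is the content of Lemma \ref{homology_facets_lemma} (the criterion $B_\ell \neq \emptyset$) at positions $\ell = 1, \ldots, r-1$, and of the defining condition of the $X$-family (some index of the last vertex is strictly less than $n$) at position $\ell = r$.

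Because the $r$ position-triples may be chosen independently subject to (1)--(3), the generating function factorizes as $g_r(x,y,z) = h(x,y,z)^r$, where $h$ enumerates a single position-triple. To evaluate $h$, I start from $\sum_{a, b, c \geq 1} x^a y^b z^c = \frac{xyz}{(1-x)(1-y)(1-z)}$, subtract the triples with all entries at least two, namely $\frac{(xyz)^2}{(1-x)(1-y)(1-z)}$, to impose $\min = 1$, and then subtract the single monomial $xyz$ arising from $(1,1,1)$ to impose $\max \geq 2$. The result simplifies algebraically to $xyz\left(\frac{1-xyz}{(1-x)(1-y)(1-z)} - 1\right)$, and raising to the $r$-th power gives the stated closed form.

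The main obstacle is the case analysis at the endpoint position $\ell = r$: there $\min = 1$ is extracted from P1 rather than P3, while $\max \geq 2$ is extracted from $X$-membership rather than from the homology-facet criterion $B_\ell \neq \emptyset$. Once this boundary case has been reconciled with the interior positions so that all $r$ positions share the same constraint set, both the factorization and the per-position computation are short and routine.
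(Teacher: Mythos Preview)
Your proposal is correct and follows essentially the same approach as the paper: identify the per-position constraints $\min=1$ and $\max\geq 2$ (the paper cites Lemmas~\ref{facets_lemma} and~\ref{homology_facets_lemma} for these, while you are more explicit about why the endpoint $\ell=r$ obeys the same constraints via P1 and the $X$-condition), then use independence to factor $g_r$ as an $r$-th power of a single-position generating function. The only difference is in how that single-position function is computed: the paper sums six explicit cases (the permutations of $f$ and $h$), whereas you obtain it by two successive subtractions (inclusion--exclusion on ``all entries $\geq 2$'' and on the triple $(1,1,1)$); both yield the same expression and neither is materially harder than the other.
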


\begin{proof}
In the statement of Lemma \ref{NewGenFunctionLemma} we allow for the fact that any given collection $C$ of $\{\overline{\lambda_{i}}, \overline{\lambda_{j}}, \overline{\lambda_{k}}\}$ of shift vectors may come from different complexes in the infinite family $\{ \Delta(n) \}_{n = 1}^{\infty}$. Note that for all indices $\ell \in [r]$,  it follows  (1) from Lemma \ref{facets_lemma} that $1 = \min \{ \overline{\lambda_{i, \ell}}, \overline{\lambda_{j, \ell}}, \overline{\lambda_{k,\ell}} \}$ , and (2) from Lemma \ref{homology_facets_lemma} that $1 < \max\{ \overline{\lambda_{i, \ell}}, \overline{\lambda_{j, \ell}}, \overline{\lambda_{k, \ell}} \} $. 

First we employ a sub-generating function which generates all possibilities for a single index $\ell$ in the collection $\{\overline{\lambda_{i}}, \overline{\lambda_{j}}, \overline{\lambda_{k}}\}$, identified as the vector $(\overline{\lambda_{i, \ell}}, \overline{\lambda_{j, \ell}}, \overline{\lambda_{k, \ell}})$
Suppose that $ \overline{\lambda_{i, \ell}}$ and $\overline{\lambda_{j, \ell}}$ and $ \overline{\lambda_{k, \ell}}$ are both not one:  then, we can generate all possibilities for the vector $(\overline{\lambda_{i, \ell}}, \overline{\lambda_{j, \ell}}, \overline{\lambda_{k, \ell}})$ with the function  \begin{equation}\label{casef} f(x,y,z)=x\sum_{\overline{\lambda_{j, \ell}}=2}^\infty\sum_{\overline{\lambda_{j, \ell}}=2}^\infty y^{\overline{\lambda_{j, \ell}}}z^{\overline{\lambda_{k, \ell}}}  = \frac{xy^2z^2}{(1-y)(1-z)}. \end{equation}

If $\overline{\lambda_{j, \ell}} = \overline{\lambda_{k, \ell}}=1$ then all possible vectors $(\overline{\lambda_{i, \ell}}, \overline{\lambda_{j, \ell}}, \overline{\lambda_{k, \ell}})$ are generated by \begin{equation}\label{caseh} h(x,y,z) =xy\sum_{\overline{\lambda_{k, \ell}}=2}^\infty z^{\overline{\lambda_{k, \ell}}} = \frac{xy z^2}{1-z}. \end{equation}
Add the permutations of the functions in (\ref{casef}) and (\ref{caseh}) to obtain  \[P(x,y,z) =f(x,y,z) +f(y,x,z) +f(z,y,x) +h(x,y,z)+h(x,z,y)+h(z,y,x)  \]  \[ =  \frac{xyz(1-xyz -(1-x)(1-y)(1-z))}{(1-x)(1-y)(1-z)} \] \[ =xyz\left(\frac{1-xyz}{(1-x)(1-y)(1-z)}-1\right).\]

Since there are exactly $r$ vectors $(\overline{\lambda_{i, \ell}}, \overline{\lambda_{j, \ell}}, \overline{\lambda_{k, \ell}})$ generated independently, we obtain

\[ g_{r} (x,y,z) =\left(xyz\left(\frac{1-xyz}{(1-x)(1-y)(1-z)}-1\right)\right)^r.\]
\end{proof}

From the generating function we derived in Lemma \ref{NewGenFunctionLemma} for shift vector collections from homology facets in $X$ to we derive the generating function $(X \sqcup Y)(x,y,z)$ for the alternating sum of all shift vectors corresponding to all homology facets - the set $X \cup Y$ (Lemma \ref{lemma:two_fams})- as follows: $$ (X \sqcup Y)(x,y,z) = \sum_{r=1}P(x,y,z)^r(-1)^{r-1} + xyz\sum_{r=1}P(x,y,z)^{r-1}(-1)^{r-1}  = $$
$$ (P(x,y,z)+xyz)\sum_{r=1}^\infty P(x,y,z)^{r-1}(-1)^{r-1} =  $$ $$\frac{xyz(1-xyz)}{(1-x)(1-y)(1-z)}\sum_{r=1}^\infty P(x,y,z)^{r-1}(-1)^{r-1} = $$
$$ \frac{xyz(1-xyz)}{(1-x)(1-y)(1-z)}\left(\frac{1}{1-xyz+\displaystyle{\frac{xyz(1-xyz)}{(1-x)(1-y)(1-z)}}}\right) =  $$ $$\frac{xyz}{(1-x)(1-y)(1-z)+xyz}.
$$

Dixon's identity follows by the usual application of the following theorem: 

\begin{thm}[Master Theorem \cite{MacMahon}]\label{master}  Let $A=(a_{i,j})_{m\times m}$ and let $X = diag(x_1,\ldots, x_m).$  Then \[[x_{1}^{k_1} \cdots x_{m}^{k_{m}}]\prod_{i=1}^m(a_{i,1}x_1+ \cdots +a_{i,m}x_m)^{k_i} = [x_{1}^{k_1} \cdots x_{m}^{k_{m}}]\frac{1}{Det(I-XA)}\] 
\end{thm}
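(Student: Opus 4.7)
My plan is to prove the Master Theorem by expanding both sides as formal power series in $x_1, \ldots, x_m$ and showing that their coefficients of $x_1^{k_1}\cdots x_m^{k_m}$ agree for every tuple $(k_1,\ldots,k_m) \in \mathbb{Z}_{\geq 0}^m$. Since the claimed equality is for a single tuple $(k_1, \ldots, k_m)$, it is natural to package all tuples into one generating-function identity: introducing formal variables $t_1, \ldots, t_m$ and letting $T = \mathrm{diag}(t_1,\ldots,t_m)$, I would establish
\[
\sum_{k_1,\ldots,k_m \geq 0}\!\Bigl([x_1^{k_1}\cdots x_m^{k_m}]\prod_{i=1}^m\bigl(a_{i,1}x_1 + \cdots + a_{i,m}x_m\bigr)^{k_i}\Bigr) t_1^{k_1}\cdots t_m^{k_m} \;=\; \frac{1}{\det(I - TA)},
\]
from which the theorem follows by matching coefficients on the $t_i$'s (which play the role of the $x_i$'s).

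The concrete steps are as follows. First, I would expand the left-hand coefficient by the multinomial theorem: for each $i$, the term $(a_{i,1}x_1+\cdots+a_{i,m}x_m)^{k_i}$ expands as $\sum_{\mathbf{n}_i}\binom{k_i}{n_{i,1},\ldots,n_{i,m}}\prod_j(a_{i,j}x_j)^{n_{i,j}}$ with $\sum_j n_{i,j} = k_i$. Taking the product over $i$ and extracting $x_1^{k_1}\cdots x_m^{k_m}$ yields a sum over nonnegative integer matrices $N = (n_{i,j})$ with $i$th row sum $k_i$ and $j$th column sum $k_j$, weighted by $\prod_i \binom{k_i}{n_{i,1},\ldots,n_{i,m}}\prod_{i,j} a_{i,j}^{n_{i,j}}$. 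Second, I would expand the right-hand side using
\[
\frac{1}{\det(I-TA)} \;=\; \exp\!\Bigl(\sum_{n\geq 1}\tfrac{1}{n}\,\mathrm{tr}\bigl((TA)^n\bigr)\Bigr),
\]
interpret $\mathrm{tr}((TA)^n)$ as a sum over closed walks of length $n$ in a weighted digraph on $[m]$ (each step $i\to j$ contributing $a_{i,j}t_i$), and expand the exponential to obtain a sum over finite multisets of such closed walks. Third, I would match the two presentations by producing a weight-preserving bijection between nonnegative integer matrices with given marginals and multisets of disjoint closed walks whose vertex-visit counts equal those marginals.

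The main obstacle is step three: the coefficients on the determinantal side come with apparent signs and cycle-length weights $1/n$, so the bijection must rigorously account for (a) why the $1/n$ factors cancel when one assembles multisets of cycles into admissible digraphs, and (b) why the alternating signs arising from the determinant disappear when one passes through the logarithm. This is precisely the combinatorial content of Foata's bijective proof via ``L-matrices,'' in which one encodes each matrix $N$ as a sequence of letters and decomposes it into its cycle structure. A cleaner but more analytic alternative is Good's 1962 proof, in which one writes each side as an iterated contour integral, $[x_1^{k_1}\cdots x_m^{k_m}]g = (2\pi i)^{-m}\oint\cdots\oint g \prod_i x_i^{-k_i-1}\,dx_i$, and applies the multivariate change of variables $x_i \mapsto x_i/(1-\sum_j a_{i,j}x_j)$ whose Jacobian determinant is exactly $1/\det(I - XA)$; the integrand then separates into $m$ one-variable integrals that both sides compute to the same residue. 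I would likely present Good's residue proof, since the Jacobian computation is the only nontrivial step once the change of variables is guessed.
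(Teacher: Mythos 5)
The paper does not prove Theorem~\ref{master}; it cites MacMahon's Master Theorem as a classical result (attributed to \cite{MacMahon}) and simply applies it twice to extract the diagonal coefficients of $\frac{xyz}{(1-x)(1-y)(1-z)+xyz}$. So there is no proof in the paper to compare against, and your proposal is supplying something the authors deliberately omitted. On its own merits, your sketch correctly identifies the two standard proofs: Foata's bijective argument via decomposition of nonnegative integer matrices with prescribed marginals into multisets of cycles (where the $1/n$ factors and the signs from $\det(I-TA)$ must be shown to cancel), and Good's analytic argument writing the coefficient extraction as an iterated Cauchy integral and applying a change of variables whose Jacobian produces the factor $1/\det(I-XA)$. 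Your description of where the real work lies is accurate, but be aware that as written this remains a plan rather than a proof: you defer both the sign/cycle-weight cancellation in the bijective route and the actual Jacobian computation (and the justification that the contours can be chosen so the substitution is legitimate) in the residue route. If you intend to include a proof, you should carry out one of these in full; if not, the appropriate move, which is what the paper does, is to cite the theorem and move on.
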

To obtain the identity, set $n_1=n_2=n_3=n$, so we are only considering the diagonal.  We apply Theorem \ref{master} twice, both times setting $m=3$ and $(x_1,x_2,x_3)=(x,y,z).$  Let \[A= \left[\begin{matrix}1 & -1 & 0 \\ 0 & 1 & -1 \\ -1 & 0 & 1 \end{matrix}\right] \quad \text{and} \quad B= \left[\begin{matrix}0 & 1 & -1 \\ -1 & 0 & 1 \\ 1 & -1 & 0 \end{matrix}\right]\]  which gives us $Det(I-XA) = (1-x)(1-y)(1-z) +xyz$ and so \[[x^ny^nz^n](x-y)^n(y-z)^n(x-z)^n = $$ $$ [x^{n-1}y^{n-1}z^{n-1}]\frac{xyz}{(1-x)(1-y)(1-z)+xyz}.\]  Then we transfer the diagonal by using  $Det(I-XB) = 1+xy+xz+yz$ giving us (\ref{DixID}), as required.

\section{Discussion}\label{sec_discussion}

The construction of $\Delta(n)$ generalizes as follows:  define a simplicial complex $\Gamma_{p}(n)$ with vertices given by sequences in $[n]^{p}$ for $p \geq 1$,  and faces given by collections of vertices
$$
\{ (i_{1,1},  \ldots , i_{1, p}), (i_{2,1},  \ldots , i_{2, p}), \ldots , (i_{r,1},  \ldots , i_{r, p}) \} 
$$
satisfying $i_{\ell, a} \in [n]$ for all $\ell \in [r]$ and all $a \in [p]$, and $i_{\ell, a} < i_{(\ell + 1), a }$ for all $\ell \in [r-1]$ and all $a \in [p]$.  Then $\Gamma_{p}(n)$ has face numbers given by 
$$
f_{s-1} = { n \choose s}^{p}
$$ 
for $0 \leq s \leq n$.  

Note that for $p = 1$,  $\Gamma_{p}(n)$ corresponds to the identity
\begin{equation}\label{ECID}
\sum_{k = 0}^{n} (-1)^{k} { n \choose k} = 0, \quad n \geq 1
\end{equation}
which appears as Exercise 1.3-(f) in \emph{Enumerative Combinatorics Volume I} by Richard Stanley \cite{EC}.  $\Gamma_{1}(n)$ is the traditional $n$-simplex $\Delta_{n-1}$ with vertices labeled with the labels $\{1, \ldots , n \}$ and therefore has the homotopy type of point: so by our technique the fact that the left-hand side of (\ref{ECID}) is $(-1)\widetilde{\chi}(\Delta_{n-1})$ is equal to zero is trivial.

For $p = 2$,  $\Gamma_{p}(n)$ corresponds to the identity
\begin{equation}\label{AignerID}
\sum_{k = 0}^{n} (-1)^{k} { n \choose k}^{2} =  \begin{cases} \ \ 0 \quad  \text{if $n$ is odd}, \\ (-1)^{n/2} {n \choose n/2}, \quad \text{if $n$ is even}. \end{cases}. 
\end{equation}
Determining the value of the right-hand side of (\ref{AignerID}) appears as Exercise 5.48 \cite{Aigner}, and can be established easily using either generating functions or a sign-reversing involution. 

We also note that we can view the simplicial complex $\Delta(n)$ as the order complex of the poset $P_{\Delta(n)}$ where the elements of $P_{\Delta(n)}$ are integer triples in $[n]^{3}$ and $(i, j, k) < (i', j', k')$ in $P_{\Delta(n)}$ if and only if  $i < i'$, $j < j'$, and $k < k'$.  Our proof of the shellability of the complex $\Delta(n)$ does not directly use the fact that $\Delta(n)$ is an order complex of a poset, but we may incorporate this view into future work on this problem.  This perspective has been used to study the \emph{poset of proper divisibility} in the recent paper \cite{Divisibility}.  By adding or removing minimal and maximal elements to the poset in hand, one can establish a bijection between $\Gamma_{p}(n)$ and a subset of the posets studied in \cite{Divisibility}.  The shelling order in \cite{Divisibility} uses techniques of shelling order complexes that generalize sufficiently to show that $\Gamma_{p}(n)$ will be shellable, non-pure, and disconnected for all $p$ and all $n$.  

\section{Future Work}\label{sec_futurework}

%Of future interest is whether the construction $\Gamma_{p}(n)$ for $p \geq 4$ will generalize to create a family of non-pure, disconnected shellable simplicial complexes $\{ \Gamma_{p}(n) : p \geq 1 \}$. If so, it is unlikely that there will be a closed form for the alternating sum of the Betti numbers: 

A curious fact is that the results in \cite{Divisibility} demonstrate that not only is $\Gamma_{p}(n)$ shellable for all $p$ and for all $n$, but also that identifying the homology facets for any fixed $p$ and $n$ for $\Gamma_{p}(n)$ is possible via techniques from \cite{NonPure1}.  It is also interesting that while the order complexes in \cite{Divisibility} that are in bijection with the members of the family $\Gamma_{p}(n)$ have distinct and interesting topological properties that hold for some ordered pairs $(p,n)$ and not for others, there are no results given relating the number of homology facets to any identities such as (\ref{DixID}).  

This is actually not surprising as counting the homology facets in a ``closed-form" fashion is known to be impossible for arbitrary $n$ and $p$ from past studies of families of identities in other enumerative disciplines.  

For example, in \cite{DeBruijn}, it is shown that for $p \geq 4$, the alternating sum of the face numbers of $\Gamma_{p}(n)$ does not have a ``closed form" for general $n$-which in this case simply means that there is no general formula as a function of $n$ and $p$.  In \cite{Calkin}, citing a personal communication with H. Wilf, N. Calkin mentions that it is not possible to write the unsigned sum
$$
\sum_{k = 0}^{n} {n \choose k}^{p}
$$ 
as a fixed number of hypergeometric terms (recall that we pointed out in the introduction that the identity (\ref{DixID}) belongs to a much more general family of hypergeometric identities).

To finalize the new proof, we appealed to Theorem \ref{master}, tying topological combinatorics back to familiar generating function techniques.  We believe this connection may deserve further study and may lead to a deeper understanding of how different types of enumeration are connected via topological and geometric objects.  In particular: 

\begin{itemize}

\item Studying  $\Gamma_{p}(n)$ for $p \geq 4$ may shed light on the interplay of the mechanics underlying 

\begin{itemize}

\item techniques for studying hypergeometric series,

\item generating function techniques, and

\item asymptotic counting techniques, which are used in the analysis of alternating sums of powers of binomial coefficients in \cite{DeBruijn}.

\end{itemize}

%\item While we could count the homology vertices for all $n$ in Lemma  \ref{homology_vertices_lemma}; we did not find a closed form for homology facets for all $n$ for any higher dimension than zero.

\item It is possible that the combinatorics of the homology facets for dimension $r \geq 1$ as $n$ is allowed to vary may also lead to  interesting formulae or allow us to observe interesting asymptotic behavior. Again, we hope this may lead to a greater understanding of the failure of the aforementioned enumerative techniques to find closed formulae for many alternating sums of binomial coefficients in past efforts. We note that we computed the number of homology facets of $\Delta(n)$ for each dimension $r$ for $n \leq 7$, and neither the sequence of the number of homology facets for increasing $r$ and fixed $n$ nor the sequence generated by fixing $r$ and incrementing $n$ appear in the OEIS \cite{OEIS}. 

\end{itemize}

\section{Acknowledgements}
 
We thank most of all Patricia Hersh, the P.h. D. advisor of R.D. for suggesting this precise formulation of the problem of reproving the identity (\ref{DixID}).  We thank Noam Elkies for suggesting the challenge of proving the identity (\ref{DixID}) using ideas from topology to Patricia Hersh.  We thank Dennis Stanton for sharing his expertise and historical knowledge regarding the identities (\ref{DixID}) and (\ref{3F2}).  We thank Richard Ehrenborg for alerting us to the work in \cite{Calkin} discussing the study of the identities associated with the reduced Euler characteristic of $\Gamma_{p}(n)$.  We thank Jeremy Dewar for assistance in computing the number of homology facets of each dimension for $n \leq 7$. We thank Russ Woodroofe for alerting us to the reference \cite{Divisibility} and for many thought-provoking discussions.

\end{document}